\colorlet{darkblue}{blue!50!black}
\colorlet{darkblue}{blue!50!black}
\newcommand{\p}{\partial}
\newcommand{\e}{\varepsilon}
\newcommand{\mek}{{\mathbf 1}}
\newcommand{\R}{{\mathbb R}}
\newcommand{\Z}{{\mathbb Z}}
\newcommand{\IP}{{\mathbb P}}
\newcommand{\pP}{{\mathbb P}}
\newcommand{\I}{{\mathbb I}}
\newcommand{\E}{{\mathbb E}}
\newcommand{\T}{{\mathbb T}}
\newcommand{\la}{\lambda}
\newcommand{\ty}{\infty}
\newcommand{\de}{\delta}
\newcommand{\wwww}{{\mathfrak w}}
\newcommand{\mm}{{\mathfrak m}}
\newcommand{\BBBB}{{\mathfrak B}}
\newcommand{\aA}{{\cal A}}
\newcommand{\FF}{{\cal F}}
\newcommand{\HH}{{\cal H}}
\newcommand{\KK}{{\cal K}}
\newcommand{\MM}{{\cal M}}
\newcommand{\PP}{{\cal P}}
\def\cF{{\mathcal F}}
\def\cD{{\mathcal D}}
\def\cM{{\mathcal M}}
\def\cA{{\mathcal A}}
\def\eps{\varepsilon}
\def\cL{{\mathcal L}}
\newcommand{\lag}{\langle}
\newcommand{\rag}{\rangle}
\newcommand{\dd}{{\textup d}}
\newcommand{\PPPP}{{\mathfrak P}}
\newcommand{\BBBBB}{{\mathcal B}}
\newcommand{\lspan}{\mathop{\rm span}\nolimits}
\newcommand{\diver}{\mathop{\rm div}\nolimits}
\theoremstyle{plain}
\newtheorem*{mtheorem}{Main Theorem}
\newtheorem*{lemma*}{Lemma}
\newtheorem{theorem}{Theorem}[section]
\newtheorem{lemma}[theorem]{Lemma}
\newtheorem{proposition}[theorem]{Proposition}
\theoremstyle{definition}
\theoremstyle{remark}
\numberwithin{equation}{section}
\begin{document}

\author{Vahagn~Nersesyan\,\footnote{NYU-ECNU Institute of Mathematical Sciences, NYU Shanghai, 3663 Zhongshan Road North, Shanghai, 200062, China, e-mail: \href{mailto:vahagn.nersesyan@nyu.edu}{Vahagn.Nersesyan@nyu.edu}}\, \footnote{Universit\'e Paris-Saclay, UVSQ, CNRS, Laboratoire de Math\'ematiques de Versailles, 78000, Versailles, France}\and
Xuhui Peng\,
\footnote{MOE-LCSM, School of Mathematics and Statistics, Hunan Normal University, Changsha, Hunan 410081, China;  e-mail: \href{mailto:xhpeng@hunnu.edu.cn}{xhpeng@hunnu.edu.cn}}
\and
Lihu  Xu\,\footnote{Department of Mathematics, Faculty of Science and Technology, University of Macau, Macau, China;  e-mail: \href{mailto:lihuxu@umac.mo}{lihuxu@umac.mo}} }

\title{Large deviations principle via Malliavin calculus for the    Navier--Stokes system driven by a degenerate  white-in-time noise}
\date{\today}
\maketitle

\begin{abstract}

  The purpose of this paper is to   establish the Donsker--Varadhan type large deviations principle (LDP) for the two-dimensional stochastic Navier--Stokes system.~The main novelty   is that the noise is   assumed to be highly     degenerate in the Fourier space.~The~proof is carried out by using a criterion   for the LDP developed  in~\cite{JNPS-2018}
   in a discrete-time setting and extended   in~\cite{MN-2015} to the continuous-time.~One of the main  conditions  of that criterion is the   uniform Feller~property for the   Feynman--Kac semigroup, which we verify    by  using    Malliavin calculus.

\smallskip
\noindent
{\bf AMS subject classification:}  35Q30, 37H15, 60B12, 60F10,  60H07, 93B05

\smallskip
\noindent
{\bf Keywords:}      Navier--Stokes system, large deviations,     Malliavin calculus,   degenerate noise, Feynman--Kac semigroup, approximate controllability,  multiplicative ergodicity, uniform Feller property

\end{abstract}

 \tableofcontents

\setcounter{section}{-1}

\section{Introduction}\label{S:0}

 In this paper, we   study   the large deviations principle (LDP)~for~the incompressible Navier--Stokes (NS) system on the torus~$\T^2=\R^2/2\pi\Z^2$:
\begin{equation} \label{0.1}
\p_t u-\nu\Delta u+\langle u,\nabla \rangle u+\nabla p=\eta(t,x), \quad\diver u=0, \quad x\in \T^2.
\end{equation}Here $u = (u_1(t,x), u_2(t,x))$  and~$p = p(t, x)$ are the unknown velocity field and   pressure of the fluid,
$\nu>0$ is the viscosity, and $\eta$ is   an external random force.     We consider this system      in the  usual    space
\begin{equation}\label{0.2}
H=\Bigg\{u\in L^2(\T^2,\R^2): \, \int_{\T^2} u(x) \dd x=0, \,\,\, \diver u=0 \,\, \text{in }\T^2   \Bigg\}
\end{equation}
   endowed    with the   $L^2$-scalar product $\lag \cdot, \cdot\rag$ and the corresponding norm $\|\cdot\|.$
Projecting the system \eqref{0.1}  to the space~$H$, we   eliminate the pressure term and obtain the evolution~equation
\begin{equation} \label{0.3}
\p_t u-\nu \Delta  u +\Pi(\langle u,\nabla \rangle u) = \Pi \eta,
\end{equation}where $\Pi$ is the Leray    projection  to  $H$ in~$L^2(\T^2,\R^2)$ (see Section~6 in Chapter~1 of \cite{lions1969}).
 We   assume that~$\eta$ is a white-in-time~noise of the form
\begin{equation}\label{0.4}
\eta(t,x)=  \p_t W(t,x),\quad W(t,x)=\sum_{l\in \KK} b_lW_l(t) e_l(x),
\end{equation}where $\KK\subset \Z^2_*$ is a  finite set,
 $\{b_l\}_{l\in \KK}$ are non-zero   real numbers,     $\{W_l\}_{l\in \KK}   $ are   independent standard
       Brownian motions     on a filtered probability space~$(\Omega,\FF,  \{\FF_t\}, \pP)$    satisfying  the usual conditions (see   Definition 2.25 in~\cite{KS1991}),  and
       $$
e_l(x)= \begin{cases}  l^\bot\cos\lag l,x\rag  & \text{if }l_1>0\text{ or } l_1=0,\, l_2>0, \\  l^\bot \sin\lag l,x\rag   & \text{if }l_1<0\text{ or } l_1=0,\, l_2<0 , \quad l =(l_1,l_2)\end{cases}
$$
  with  $l^\bot=(- l_2, l_1).$
  In other words,   $\KK$ is  the collection of the  Fourier modes    directly perturbed by the noise.~Under the above   assumptions, the NS system~\eqref{0.3}
     defines a family of   Markov processes~$(u_t,\IP_u)$ parametrised by the initial condition $u(0)=u \in H$.~The~ergodic properties of this family  have been extensively studied in the literature.   It~is~now well known that~$(u_t,\IP_u)$  admits a unique and exponentially mixing  stationary measure, provided that the set $\KK$  is sufficiently large.~Under the condition that~$\KK$ contains all the determining modes, the ergodicity has~been~established in different settings in
      the papers~\cite{FM-1995,KS-cmp2000,EMS-2001, KS-jmpa2002, BKL-2002}.~Later, it~was shown that   the ergodicity   remains true for much smaller set $\KK$; see~the papers~\cite{HM-2006, HM-2011,FGRT-2015}
      for the case when the noise  is white-in-time   and~\cite{KNS-2018, KNS-2019} for the case of  a
        general bounded noise.~The~reader is referred to the book~\cite{KS-book} for more references and for  detailed description of different  methods.

 In this paper, we study the Donsker--Varadhan type LDP for the NS system~\eqref{0.3}.~This type of LDP has been extensively   studied in the case of finite-dimensional diffusions and   Markov processes in compact spaces;
    see the papers~\cite{DV-1975},  the  books~\cite{FW1984,DS1989,ADOZ00}, and the  references therein.~The paper~\cite{Wu-2001} established a general criterion for Donsker--Varadhan type    LDP  for   Markov processes that are strong Feller and irreducible. In that paper the criterion is applied   to a class of stochastic damping Hamiltonian systems. There are only few papers considering the problem of      LDP for   randomly forced  PDEs.~The first results are obtained in~\cite{gourcy-2007b, gourcy-2007a} in the case of  the stochastic Burgers  and   NS equations with strong assumptions on~the decay of the coefficients~$\{b_l\}$.~Indeed,  these papers use the criterion of~\cite{Wu-2001}, so they require   some lower bounds  for   $\{b_l\}$  in order    to  guarantee the strong~Feller~property. These   assumptions     have been   relaxed to the conditions~$b_l\neq 0$ for all~$l\in \Z^2_*$ and~$\sum_{l\in \Z^2_*}|l||b_l|^2<+\ty $
 in the papers~\cite{JNPS-cpam2015, JNPS-2018}, where a family of dissipative PDEs is considered driven by a random kick-force.   The proofs of these papers are based on a study~of the long-time behaviour of Feynman--Kac semigroup   and    a Kifer type    criterion for  the  LDP.~Under similar   non-degeneracy  conditions, the local LDP is proved in~\cite{MN-2015} for the  stochastic   damped nonlinear wave equation, and the full LDP    is proved in~\cite{VN-2019} for the stochastic NS system.~A~controllability approach is used in~\cite{JNPS-2019} to prove   the LDP   for the Lagrangian trajectories of the NS system. Recently,
the  criterion of~\cite{Wu-2001} has been used in~\cite{WX-2018} in the case of   SPDEs driven by stable type noises and in~\cite{WXX-2021} in the case of non-linear monotone SPDEs with white-in-time noise.

 \smallskip

All the papers  mentioned above establish the LDP under the assumption that  the noise is non-degenerate, i.e., perturbs directly all the Fourier modes in the equation.~The goal of the present paper is to establish the LDP in the case of a highly degenerate noise, i.e., when only few  Fourier modes are directly perturbed.~To formulate our main result, let us recall that a set $\KK\subset \Z^2_*$
is a generator if any element of~$\Z^2$
is a finite linear
combination of elements of~$\KK$ with integer coefficients. In~what follows, we    assume that the following condition~is~satisfied.
\begin{description}
\item[\hypertarget{H}]{\bf (H)}
{\sl The set  $\KK\subset \Z^2_*$ in \eqref{0.4} is a finite symmetric (i.e.,  $-\KK=\KK$)  generator
that contains at least two non-parallel vectors~$m$ and~$n$ such that $|m|\neq |n|$.}
\end{description}
This is   the   condition under which   the  ergodicity of the NS system is  established  in~\cite{HM-2006, HM-2011} in the case of a white-in-time noise and in \cite{KNS-2018} in the case of a bounded   noise. The set
$$
\KK=\{(1,0), \, (-1,0), \, (1,1), \, (-1,-1)\}\subset \Z^2_*
$$ is an   example satisfying this condition.

  For any $u\in H$, let us define the   family of  occupation~measures  \begin{equation} \label{0.5}
\zeta_t=\frac1t\int_{0}^{t}\delta_{u_s}   \dd    s,  \,\,\,  t>0
\end{equation}on the probability space  $(\Omega,\FF,\IP_u)$, where
$\de_v$ is the Dirac measure concentrated at $v\in H$.
 \begin{mtheorem}
     Under the Condition~\hyperlink{H}{\rm(H)},    the family~$\{\zeta_t,t>0\}$         satisfies the~LDP.
       \end{mtheorem}
See Theorem~\ref{T:1.1} for   more detailed formulation of this result.~The proof   is   carried out by using a criterion     for the LDP developed  in~\cite{JNPS-2018}
   in a discrete-time setting and extended   in~\cite{MN-2015}  to the continuous-time.~According to that criterion, the LDP will be established if we show that the following five properties hold for the  Feynman--Kac semigroup  associated with the NS system~\eqref{0.3}:
   growth~properties,   existence of  eigenvector, time-continuity, uniform irreducibility, and uniform Feller property.~The first three properties are verified in~\cite{VN-2019} and they hold no matter   how  degenerate is the noise. The~uniform irreducibility~property follows from the approximate controllability results obtained in~\cite{AS-2005, AS-2006}.~It is interesting to note that  Condition~\hyperlink{H}{\rm(H)}  is necessary and sufficient for the approximate controllability of the NS system if one uses controls acting via the
     Fourier modes in $\KK$.~The main technical difficulty of this paper is related to the verification of the uniform Feller property, which we carry out    by developing
  the Malliavin calculus analysis of the papers~\cite{MP-2006, HM-2006, HM-2011}.
  More precisely, we derive the uniform Feller property from a gradient estimate for the
  Feynman--Kac semigroup.~The proof of latter   contains essential differences with respect to the situations studied in~\cite{MP-2006, HM-2006, HM-2011}   because of the  non-Markovian  character of the Feynman--Kac semigroup.

  \smallskip

    This paper is organised as follows. In Section \ref{S:1}, we
    explain how the Main Theorem is derived from the above-mentioned five properties. In Section \ref{S:2}, we recall some elements of   Malliavin calculus, and in Section \ref{S:3}, we  verify the uniform Feller property.

  \subsubsection*{Acknowledgement}
  Xuhui Peng would like to gratefully thank University  of  Macau for the hospitality, his research is supported in part by
  by NNSFC (No. 12071123), Scientific Research
Project of Hunan Province Education Department (No. 20A329) and  Program of Constructing Key Discipline in Hunan Province. Lihu Xu is supported in part by NNSFC grant (No.12071499), Macao S.A.R grant FDCT 0090/2019/A2 and University of Macau grant  MYRG2018-00133-FST.

 \subsubsection*{Notation}

  In this paper, we use the following  notation.

\smallskip
\noindent
$H$ is the space of divergence-free square-integrable vector fields on $\T^2$ with zero mean value (see~\eqref{0.2}).
It is endowed with the $L^2$-norm $\|\cdot\|$.

\smallskip
\noindent
$H^m= H^m(\T^2, \R^2)\cap H$, where $H^m(\T^2, \R^2)$ is the       Sobolev space of order~$m\ge1$. We endow the space  $H^m$ with the usual Sobolev norm~$\|\cdot \|_m$.

\smallskip
\noindent
$B_{H^m}(a,r)$   is the closed   ball in $H^m$ of radius~$r>0$ centred at~$a$.~We write~$B_{H^m}(r)$ when~$a=0$.

\smallskip
\noindent
 We   consider the NS system in the vorticity formulation in the space of  square integrable      zero  mean   functions:
\begin{align}
\label{0.6}
\tilde{H}=\left\{w\in L^2(\T^2,\R):\int_{\T^2}w(x)\dd  x=0\right\}
\end{align}
 equipped  with the $L^2$-norm $ \|\cdot\|.$
Let $\tilde H^m= H^m(\T^2, \R)\cap \tilde H$, $m\ge1$ be endowed with the Sobolev norm   denoted by $\|\cdot \|_m$.

\smallskip
\noindent
$L^\ty(H)$  is the space of bounded Borel-measurable functions $\psi:H\to\R$   with the norm $\|\psi\|_\ty=\sup_{u\in H}|\psi(u)|$.~$C_b(H)$ is the space of continuous functions $\psi\in L^\ty(H)$.~$C^1_b(H)$ is the space of   functions~$\psi\in C_b(H)$ that are continuously Fr\'echet differentiable with bounded derivative.

\smallskip
\noindent
Let $\wwww:H\to[1,+\ty]$ be a Borel-measurable function. Then $C_\wwww(H)$ (resp.,~$L_\wwww^\ty(H)$) is the space of continuous (resp., Borel-measurable) functions $\psi:H\to\R$ such that
$\|\psi\|_{L_\wwww^\ty}=\sup_{u\in H} |\psi(u)|/\wwww(u)<+\ty.$

 \smallskip
\noindent
  $\MM_+(H)$ is the collection   of non-negative finite Borel measures on $H$ endowed~with the weak convergence topology.~For any~$\psi\in L^\ty(H)$ and  $\mu\in \MM_+(H)$,  we write~$\lag \psi,\mu\rag=\int_H \psi(u)\mu(\dd u).$
 $\PP(H)$ is the subset of probability   measures, and~$\PP_\wwww(H)$ is  the set of $\mu\in\PP(H)$ such that $\lag \wwww,\mu\rag<+\ty$.

\smallskip
\noindent
$\cL(X,Y)$ is the space of linear bounded operators between Banach spaces $X$ and~$Y$ endowed with the natural norm $\|\cdot\|_{\cL(X,Y)}$.

\smallskip
\noindent
  The   letter $C$ is used to denote    unessential constants that can change from line to line.

 \section{Main results}\label{S:1}

\subsection{LDP and multiplicative ergodicity}\label{S:1.1}

    Recall that  a mapping $I : \PP(H) \to [0, +\ty]$ is  a   good rate function  if   its level~sets
    $$
    \left\{\sigma\in\PP(H) : I(\sigma) \le \alpha \right\}, \quad \alpha \ge0
    $$ are compact. Moreover, if the effective domain of $I$, defined by
$$
D_I=\left\{\sigma\in\PP(H):I(\sigma)<+\ty \right\},
$$ is not a singleton,     we say that $I$ is a non-trivial good rate function.
For any~$\gamma>0$ and $M>0$, we   set
$$
\Lambda(\gamma,M) : = \Bigg \{\nu \in \PP(H) : \int_H e^{\gamma\|u\| ^2 }\nu ( \dd u) \le M \Bigg\}.
$$
 The following is a more   detailed version of the Main Theorem formulated in the Introduction.
\begin{theorem}  \label{T:1.1}
Assume that Condition~\hyperlink{H}{\rm(H)} is verified.~Then, for any~$\gamma>0$  and~$M>0$, the family of random probability measures~$\{\zeta_t,t>0\}$ defined by~\eqref{0.5}        satisfies the LDP uniformly w.r.t. the initial measure $\nu\in \Lambda(\gamma,M)$. More~precisely,  there is a non-trivial 		good 	rate 	function $I: \PP(H)\to [0,+\ty]$  that does not depend on   $\gamma$ and  $M$ and satisfies the inequalities   \begin{gather*}
	\limsup_{t\to+\ty}    \frac{1}{t} \log \sup_{\nu\in \Lambda(\gamma,M)} \pP_\nu\left\{\zeta_t\in F\right\}\le -\inf_{\sigma\in F} I(\sigma),\\
\liminf_{t\to+\ty} \frac{1}{t}  \log \inf_{\nu\in \Lambda(\gamma,M)}\pP_\nu\left\{\zeta_t\in G\right\}\ge -\inf_{\sigma\in G}I(\sigma)
\end{gather*}for any closed set $F$ and any open set $G$ in $\PP(H)$.
 \end{theorem}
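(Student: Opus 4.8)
The plan is to derive Theorem~\ref{T:1.1} from the abstract criterion for the LDP established in~\cite{JNPS-2018} in discrete time and extended to continuous time in~\cite{MN-2015}, applied to the Feynman--Kac semigroup of the NS system~\eqref{0.3}. For a function $V$ in a suitable class of bounded functions, set
\[
\big(\PPPP_t^V\psi\big)(u)=\E_u\Big[\psi(u_t)\exp\Big(\int_0^tV(u_s)\,\dd s\Big)\Big],\qquad \psi\in C_b(H),\ u\in H,\ t\ge0.
\]
The criterion asserts that the family $\{\zeta_t\}$ satisfies the LDP uniformly over $\Lambda(\gamma,M)$, with a non-trivial good rate function obtained as the Legendre transform of the limit $\lambda(V)=\lim_{t\to+\ty}t^{-1}\log(\PPPP_t^V\mek)(u)$, provided the following five properties hold with respect to a weight of the form $\wwww(u)=\exp(\kappa\|u\|^2)$ for small $\kappa>0$: growth bounds on $\PPPP_t^V$ and the relevant moments; existence of an eigenvector, i.e.\ a positive $h_V\in C_\wwww(H)$ and $\lambda_V\in\R$ with $\PPPP_t^Vh_V=e^{t\lambda_V}h_V$; time-continuity of $t\mapsto\PPPP_t^V\psi$; uniform irreducibility; and the uniform Feller property for the normalised semigroup $e^{-t\lambda_V}\PPPP_t^V$ on a determining family of test functions. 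Thus the whole proof reduces to checking these five properties for~\eqref{0.3}.

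The first three properties are of a ``soft'' nature: they follow from the standard a~priori enstrophy estimates, the exponential moment bounds for $\|u_t\|$, and the Feller property of $(u_t,\pP_u)$ in $H$, and they have already been verified in~\cite{VN-2019}; crucially, they hold no matter how degenerate the noise is, so I would simply invoke them. The uniform irreducibility is the first point at which Condition~\hyperlink{H}{\rm(H)} is genuinely used: I would deduce it from the approximate controllability of the NS system by controls acting only through the Fourier modes indexed by $\KK$, which is the Agrachev--Sarychev type result of~\cite{AS-2005,AS-2006} and is valid precisely when $\KK$ satisfies~\hyperlink{H}{\rm(H)}. Concretely, approximate controllability combined with the smoothing of the equation and a Girsanov argument yields, for every $R,\rho>0$ and every reference point $\hat u$, a time $\ell>0$ and a probability $p>0$ such that the transition probability from any $u\in B_H(R)$ into $B_H(\hat u,\rho)$ at time $\ell$ is at least $p$, which is exactly the uniform lower bound required by the criterion.

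The main work, and the main obstacle, is the uniform Feller property. Following the scheme of~\cite{JNPS-2018,MN-2015}, I would reduce it to a gradient estimate for the Feynman--Kac semigroup of the form
\[
\big\|\nabla\big(\PPPP_t^V\psi\big)(u)\big\|\le C\big(\|u\|\big)\,\big(\PPPP_t^V\mek\big)(u)\,\Big(\|\psi\|_\ty+e^{-ct}\,\Lip(\psi)\Big),
\]
valid for all $t\ge0$ and all Lipschitz $\psi$, and then pass to general continuous $\psi$ by approximation; since $(\PPPP_t^V\mek)(u)\asymp e^{t\lambda_V}h_V(u)$, dividing by it turns this into the uniform equicontinuity on bounded sets demanded by the criterion. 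To prove the gradient estimate I would adapt the Malliavin calculus of~\cite{MP-2006,HM-2006,HM-2011}: an infinitesimal perturbation of the initial datum in a direction $\xi$, transported by the linearised flow $J_{0,t}\xi$, is split into a Malliavin variation --- realised by an adapted control $v$ whose $L^2$-cost is controlled via invertibility of the (regularised) Malliavin matrix, which in turn rests on the H\"ormander-type bracket-spanning property of the vector fields $\{e_l\}_{l\in\KK}$ guaranteed by Condition~\hyperlink{H}{\rm(H)} --- plus a residual that decays exponentially thanks to the dissipativity of $\nu\Delta$; integration by parts in the Wiener space then moves the derivative off $\psi$.

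The essential new difficulty, absent in~\cite{MP-2006,HM-2006,HM-2011}, is the non-Markovian character of $\PPPP_t^V$: the weight $\exp(\int_0^tV(u_s)\,\dd s)$ is not a martingale, and differentiating it produces an extra term $\int_0^t\langle\nabla V(u_s),J_{0,s}\xi\rangle\,\dd s$ coupled to the linearised flow, while the Wiener-space integration by parts has to be carried out in the presence of this weight. I expect the heart of the argument to be an iteration over time intervals of fixed length: on each interval one pays a controlled price for the weight (estimated via exponential moments of the enstrophy), chooses the Malliavin control so as to cancel the dominant part of the transported perturbation, and absorbs the coupling with $\nabla V$ using the contraction from $\nu\Delta$ together with Foias--Prodi type bounds; summing the contributions over the intervals yields the displayed gradient estimate. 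Once the five properties are in place, the abstract criterion of~\cite{JNPS-2018,MN-2015} delivers the LDP of Theorem~\ref{T:1.1}.
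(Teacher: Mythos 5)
Your proposal follows essentially the same route as the paper: apply the criterion of~\cite{JNPS-2018,MN-2015} to the Feynman--Kac semigroup $\PPPP_t^V$ (the paper packages this as the multiplicative ergodic Theorem~\ref{T:1.2} plus a Kifer-type argument borrowed from~\cite{VN-2019}), take the first three properties from~\cite{VN-2019}, get uniform irreducibility from the Agrachev--Sarychev controllability under~(H), and prove the uniform Feller property via a Malliavin-calculus gradient estimate in the spirit of~\cite{MP-2006,HM-2006,HM-2011}, splitting $J_{0,t}\xi$ into a Malliavin variation plus an exponentially decaying residual and handling the non-Markovian weight by conditioning on unit time intervals. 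The only cosmetic differences (support-of-Wiener-measure versus Girsanov for irreducibility, pointwise versus $\|\PPPP_t^V\mek\|_{R_0}$ normalisation in the gradient bound) do not change the argument.
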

 This theorem is derived from   a     multiplicative ergodic theorem for the NS system. To formulate that result, let us introduce    the following   weight functions:
\begin{align*}
\mm_\gamma(u)&=\exp\left(\gamma  \|u\|^2\right), \quad \gamma>0,\\
\wwww_m(u)&=1+\|u\|^{2m},\quad m\ge1,\,\, u\in H.
 \end{align*}There is a constant  $\gamma_0=\gamma_0(\BBBB_{0})>0$, where  $\BBBB_{0}=\sum_{l\in \KK}b_l^2$, such that
\begin{align}
\E_u \mm_\gamma(u_t) &\le e^{-\gamma t} \mm_\gamma(u)+C,\label{1.1}
\\
 \E_u \wwww_m(u_t) &\le e^{-2m  t} \wwww_m(u)+C \label{1.2}
\end{align}   for any $\gamma\in(0,\gamma_0)$, $m\ge1$,  $u\in H$, and $t\ge0$,
 where $C=C(m,\varkappa, \BBBB_{0})>0$ is a constant; e.g.,~see Proposition~2.4.9 in~\cite{KS-book} and Lemma~5.3 in~\cite{VN-2019} for a proof of these inequalities.

  For any $V\in C_b(H),$
 the     Feynman--Kac semigroup  associated with the Markov family $(u_t,\pP_u)$   is defined by
$$
\PPPP_t^V\psi(u)=\E_u \left\{ \exp\left(\int_{0}^tV(u_s) \dd s\right) \psi(u_t)\right\}, \quad \PPPP_t^V: C_b(H)\to C_b(H);
$$  its dual is denoted by $\PPPP_t^{V*}:\MM_+(H)\to \MM_+(H)$.~From~\eqref{1.1}    it~follows that~$\PPPP_t^V$ maps the space $C_{\mm_\gamma}(H)$ into itself
for   $\gamma\in(0,\gamma_0)$.
\begin{theorem}  \label{T:1.2}
Assume that Condition~\hyperlink{H}{\rm(H)} is verified and $   V\in C_b^1(H)$.~Then   there are constants      $ m=m(V)\ge1$ and  $\gamma=\gamma( \BBBB_{0})\in (0,\gamma_0)$      such that
 there are   unique      eigenvectors $ h_V  \in   C_{\wwww_m}(H)$ and $ \mu_V \in \PP_{\mm_\gamma}(H)$ for the semigroups $\PPPP_t^V$ and $\PPPP_t^{V*}$ corresponding to an
      eigenvalue $\la_V>0$, i.e,
 $$\PPPP^{V*}_t\mu_V=\la_V^t \mu_V, \quad\quad \PPPP^V_t h_V=\la_V^t h_V\quad  \quad \text{for $t>0$},
$$     and  normalised by     $\langle h_V,\mu_V\rangle=1$.
 For   any   $\psi\in C_{\mm_\gamma} (H),\,     \nu\in \PP(H)$, and~$R>0$,   the following limits hold as~$t\to+\ty$:
\begin{align*}
\lambda_V^{-t}\PPPP_t^V\psi\to\,&\lag \psi,\mu_V\rag h_V
\,\,\textup{in } C_b(B_H(R))\cap L^1(H,\mu_V),  \\
\lambda^{-t}_V\PPPP_t^{V*}\nu \to\,&\lag h_V,\nu\rag\mu_V
 \,\,\textup{in }\MM_+(H).
\end{align*}
Furthermore, for any   $M>0$ and $\varkappa\in (0,\gamma)$,
$$
\lambda_V^{-t}\,\E_\nu\left\{\exp\bigg(\int_0^tV(u_s)\dd s \bigg) \psi(u_t)\right\}\to\langle \psi,\mu_V\rangle\,\langle h_V,\nu\rangle
$$
  uniformly w.r.t. $\nu\in\Lambda(\varkappa,M)$ as  $t\to+\ty$.
\end{theorem}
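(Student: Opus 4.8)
The plan is to deduce Theorem~\ref{T:1.2} from the abstract multiplicative ergodic theorem established in the discrete-time setting in~\cite{JNPS-2018} and extended to continuous time in~\cite{MN-2015}. That criterion asserts that, given a Markov family and a Feynman--Kac potential $V$, the semigroup $\PPPP_t^V$ admits a unique positive eigenvector $h_V$ in an appropriate weighted space of continuous functions, its dual $\PPPP_t^{V*}$ admits a unique eigenmeasure $\mu_V$, both with a common eigenvalue $\la_V>0$, and one has
$$
\la_V^{-t}\PPPP_t^V\psi\to\lag\psi,\mu_V\rag\,h_V,\qquad \la_V^{-t}\PPPP_t^{V*}\nu\to\lag h_V,\nu\rag\,\mu_V,
$$
together with the uniform-in-$\nu$ refinement over the sets $\Lambda(\varkappa,M)$, provided five conditions hold: moment (growth) estimates for the unperturbed dynamics and for $V$; existence of an eigenvector; time-continuity of $t\mapsto\PPPP_t^V\psi$; uniform irreducibility of the $V$-perturbed dynamics on balls of $H$; and the uniform Feller property for the Feynman--Kac semigroup. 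Hence the proof reduces to verifying these five conditions for the NS system~\eqref{0.3} under Condition~\hyperlink{H}{\rm(H)} and then reading off the conclusions; the parameters $m=m(V)\ge1$ and $\gamma=\gamma(\BBBB_0)\in(0,\gamma_0)$ are fixed so as to be compatible with~\eqref{1.1}--\eqref{1.2}, namely $\gamma$ small in terms of $\BBBB_0$ so that $\mm_\gamma$ remains a Lyapunov function after the perturbation, and $m$ large in terms of $\|V\|_\ty$ so that $\wwww_m$ dominates the Feynman--Kac weight.

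I would take the first three conditions from~\cite{VN-2019}, where they are verified for an arbitrary nonzero noise and therefore hold no matter how degenerate $\KK$ is: the exponential bound~\eqref{1.1} and the polynomial bound~\eqref{1.2} give the growth properties; the parabolic smoothing of the NS flow makes $\PPPP_t^V$ compact on the relevant weighted spaces and hence, by a Schauder/Krein--Rutman type argument, supplies a positive eigenvector; and the $H$-continuity of $s\mapsto u_s$ together with $V\in C_b(H)$ and dominated convergence gives the time-continuity. The uniform irreducibility --- that from any ball $B_H(R)$ the $V$-perturbed process reaches, in a bounded time and with a quantitative lower bound, a fixed small neighbourhood of $0$ --- I would obtain from the approximate controllability of the $2$D NS system by controls supported on the Fourier modes in $\KK$. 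This is exactly where Condition~\hyperlink{H}{\rm(H)} enters: by the Agrachev--Sarychev type results of~\cite{AS-2005, AS-2006}, the set $\KK$ being a generator that contains two non-parallel modes of distinct length is precisely the condition for this controllability, and a Girsanov argument turns it into the required lower bounds, uniformly over bounded initial data.

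The core of the argument, and the step I expect to be the main obstacle, is the uniform Feller property, which I would establish (this is the content of Section~\ref{S:3}) from a Malliavin-calculus gradient estimate of the schematic form
$$
\big\|\nabla\big(\PPPP_t^V\psi\big)(u)\big\|\le C(\|u\|)\,\PPPP_t^V\mek(u)\Big(\sup_{B_H(R)}|\psi|+e^{-ct}\|\psi\|_{L^\ty_{\wwww_m}}\Big),
$$
obtained by adapting the scheme of~\cite{MP-2006, HM-2006, HM-2011}: an arbitrary perturbation direction is decomposed into a part in the range of the Malliavin matrix, handled by an integration-by-parts (Bismut--Elworthy--Li) formula --- this produces the $\sup_{B_H(R)}|\psi|$ term and rests on the H\"ormander/Lie-bracket analysis of the NS nonlinearity, using that $\KK$ generates $\Z^2$ --- and a complementary part, controlled through the exponential decay of the linearised flow on that complement. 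The genuinely new difficulty with respect to the cited works is that $\PPPP_t^V$ is \emph{not} a Markov semigroup: the factor $\exp(\int_0^tV(u_s)\,\dd s)$ destroys the cocycle identity underlying the usual manipulations, so the integration-by-parts and the propagation of the estimate in time must be carried out while differentiating this multiplicative weight --- which introduces $\nabla V$, and this is where the assumption $V\in C_b^1(H)$ is used --- and while controlling the cross terms that couple the Malliavin derivative with the running exponential. Once the gradient estimate is available the uniform Feller property follows, the abstract criterion applies, and all the assertions of Theorem~\ref{T:1.2} --- existence, uniqueness and the normalisation $\lag h_V,\mu_V\rag=1$ of the triple $(h_V,\mu_V,\la_V)$, the two convergences in $C_b(B_H(R))\cap L^1(H,\mu_V)$ and in $\MM_+(H)$, and the uniform convergence over $\Lambda(\varkappa,M)$ --- are obtained.
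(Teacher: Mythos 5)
Your strategy matches the paper's proof essentially step for step: Theorem~\ref{T:1.2} is obtained by applying the abstract criterion of~\cite{JNPS-2018} as extended in~\cite{MN-2015}, taking the growth, eigenvector and time-continuity properties from~\cite{VN-2019}, deducing uniform irreducibility from the Agrachev--Sarychev controllability results under Condition~(H), and establishing the uniform Feller property via a Malliavin-calculus gradient estimate for the Feynman--Kac semigroup in the spirit of~\cite{MP-2006, HM-2006, HM-2011}, with the non-Markovian exponential weight (hence $V\in C_b^1(H)$) as the main new difficulty. The only cosmetic differences (invoking Girsanov rather than the full support of the Wiener measure for irreducibility, and phrasing the gradient bound in Bismut--Elworthy--Li language rather than the Hairer--Mattingly approximate-inverse construction) do not change the substance.
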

 This theorem   improves Theorem~1.1 in~\cite{VN-2019} in two directions. First, in this theorem,   the noise is very degenerate, while in \cite{VN-2019}  all the Fourier modes are assumed to be directly perturbed by the noise. Second, in the present situation,  the class of functions~$V$ is   larger, since the result   in \cite{VN-2019}  applies only to functions depending  on finite-dimensional projection of~$u$.

Theorem~\ref{T:1.2}    can be viewed as    an improvement of Theorem~2.1 in~\cite{HM-2006}. Indeed, in the case~$V=0$, the Feynman--Kac semigroup reduces to the   Markov semigroup with eigenvalue $\la_V=1$, eigenvector $h_V=\mek$ (the function identically equal to~$1$~on~$H$), and the measure~$\mu_V =\mu$ is the unique stationary measure.  The above limits   imply that $\mu$ is mixing.

Theorem~\ref{T:1.1} is derived from Theorem~\ref{T:1.2} by using a Kifer type criterion
in unbounded spaces. Since this derivation is literally the same as in the non-degenerate case (see Section~1 in~\cite{VN-2019}), we do not give the details. The proof
of Theorem~\ref{T:1.2} is discussed in the next subsection.

\subsection{Proof of  Theorem~\ref{T:1.2}}\label{S:1.2}

   The proof of Theorem~\ref{T:1.2} is carried out  by applying     a result on   large-time asymptotics of generalised Markov semigroups
    established in \cite{JNPS-2018} in the discrete-time setting and extended  in \cite{MN-2015} to the continuous-time.~Here we apply that result to the
    Feynman--Kac semigroup $\PPPP_t^V$ and the associated   kernel~$P_t^V(u,\Gamma)=(\PPPP_t^{V*} \de_u ) (\Gamma)$,~$u\in H$, $     \Gamma\in  \BBBBB(H),$
  where
   $\delta_u$ is the Dirac measure concentrated at $u$.

By the  regularising property of the NS system,
the measure~$P^V_{t}(u,\cdot)$ is concentrated on the space~$H^2$ for any  $u\in H$ and   $t>0$.~For any~$R>0$, let us
  denote~$X_R=B_{H^2}(R)$, and let $V\in C_b(H)$ be arbitrary.~Then  the following   properties~hold.
   \begin{description}
       \item[Growth properties.]        There are     numbers $R_0>0$, $\gamma\in (0,\gamma_0)$, and $m\ge1$    such that the following quantities are finite:
\begin{equation} \label{1.3}
 \sup_{t\ge0}
\frac{\|\PPPP^V_t\wwww_m\|_{L_{\wwww_m}^\ty}}{\|\PPPP^V_t{\mek}\|_{R_0}},\,\,\,\,
 \sup_{t\ge0}
\frac{\|\PPPP_t^V\mm_\gamma\|_{L^\ty_{\mm_\gamma}}}{\|\PPPP_t^V{\mek}\|_{R_0}},  \,\,\, \,
   \sup_{t\ge1}
\frac{\|\PPPP_t^V \Phi  \|_{L^\ty_{\mm_\gamma}}}{\|\PPPP_t^V{\mek}\|_{R_0}},
\end{equation}
where~$\|\psi\|_R=\sup_{u\in X_R}|\psi(u)|$  and~$\Phi(u)=\|u\|_{H^2}^2$.
   \item[Existence of an eigenvector.]For any  $t>0$, there is a measure $\mu_{t,V}\in\PP(H)$ and a number $\lambda_{t,V}>0$ such that  $\PPPP_t^{V*}\mu_{t,V}=\lambda_{t,V}\mu_{t,V}$.~Moreover,  for any numbers~$\varkappa\in (0,\gamma_0)$     and  $n,m\ge1$, we have
  \begin{gather*}
  \int_H\!  \left(\|u\|_{H^2}^{n}+\mm_\varkappa (u) \right) \mu_{t,V}(\dd u)<+\ty,  \\
 \|\PPPP_t^V\wwww_m\|_{X_R}\int_{X_R^c}\!\wwww_m(u)\mu_{t,V}(\dd u)\to0\,\,\textup{as } R\to+\ty.
\end{gather*}
    \item[Time-continuity.]
    For any 	 $m\ge1$,  $ \psi\in C_{\wwww_m}(H) $, and  $u\in H$,
    the function $t\mapsto\PPPP^V_t \psi(u) $, $\R_+\to\R$ is continuous.
    \item[Uniform irreducibility.] For
  any $\rho, r, R> 0$, there are numbers       $l=l(\rho,r,R)>0$ and~$p=p(V,\rho,r)>0$  such that
\begin{equation}\label{1.4}
P^V_l(u_0,B_{H}(\hat u,r))\ge p
\end{equation} for any  $u_0\in X_R$ and  $\hat u\in X_\rho$.
\item[Uniform Feller property.]
   The family of functions $\{\|\PPPP_t^V\mek\|_R^{-1} \PPPP_t^V\psi,t\geq 0 \}$
is~uniformly equicontinuous\,\footnote{By   uniform equicontinuity of  $\{\|\PPPP_t^V{\mek}\|_R^{-1}\PPPP^V_t \psi, t\geq 0\}$ on  $X_R$ we mean  that  for any $\eps>0$,  there  is $\delta>0$ such that
$\|\PPPP^V_t{\mek}\|_R^{-1} \big|\PPPP_t^V\psi (u)-\PPPP_t^V\psi (u')\big|<\e$ for any  $u,u'\in X_R$ with  $\|u-u'\|<  \delta $ and any $t\ge0$.
} on    $X_R$ for any $V,  \psi\in C_b^1(H)$ and $R\geq R_0.$
\end{description}
The first three of the above properties   are established\,\footnote{These propositions and lemma in~\cite{VN-2019} are formulated in the case when $X_R=B_{H^1}(R)$  and   the noise is non-degenerate. However, their proofs work in the setting
 of the present paper   without any change.} in Propositions~2.1 and~2.5 and Lemma~2.3 in~\cite{VN-2019}.~The proof of the uniform irreducibility is given below, and the   uniform Feller property is established in Section \ref{S:3}.~Theorem~\ref{T:1.2} is obtained by  applying Theorem~7.4 in~\cite{MN-2015} and by literally repeating the arguments of Section~4 in~\cite{VN-2019}.

    \begin{proof}[Proof of  uniform irreducibility] Let $P_t(u_0,\cdot)$  be   the Markov transition kernel of the family~$(u_t,\IP_{u_0})$. The boundedness of
      $V$ implies that
\begin{equation}\label{1.5}
P_t^V(u_0,\dd v)\ge e^{-t\|V\|_\infty}P_t(u_0,\dd v) \quad\textup{for } t>0,\, u_0\in H.
\end{equation}  According to~\cite{AS-2005, AS-2006}, under Condition~\hyperlink{H}{\rm(H)}, the NS system is approximately controllable in the space~$H$ by controls  taking values in the space
$$
\HH_\KK=\lspan\{e_l:l\in \KK\}.
$$ This implies  that, for any~$u_0, \hat u\in H$ and $r>0$, there is a function $\zeta\in C^\ty([0,1];  \HH_\KK)$ such~that
$$
\|u(1,u_0, \zeta)-\hat u\| <  r,
$$where $u(t,u_0, \zeta)$ is the solution of the deterministic NS system \eqref{0.3}  with the initial condition $u(0)=u_0$ and the (control) force~$\eta= \p_t \zeta$.~Using the fact that the mapping $(u_0, \zeta)\mapsto u(1,u_0, \zeta)$ is continuous from $H\times C([0,1]; \HH_\KK)$ to~$H$, the non-degeneracy of the law of the Wiener process~$ W$ in $C([0,1]; \HH_\KK)$ (i.e.,~the~support of the law of~$ W$ coincides with the
entire~space $C([0,1]; \HH_\KK)$), a simple compactness argument, and inequality \eqref{1.5}, we~arrive~at~\eqref{1.4}.
  \end{proof}

  \section{Elements of Malliavin calculus}\label{S:2}

 The uniform Feller property is proved by using   Malliavin calculus analysis from the papers~\cite{MP-2006, HM-2006, HM-2011}.
   In this section,  we recall some basic definitions and estimates from there.  To match the framework of these    papers, we rewrite the NS system \eqref{0.3}  in the vorticity formulation:
   \begin{equation}
  \label{2.1}
    \p_t w -\nu \Delta w+B(\KK w,w)=\sum_{l \in \KK }  b_l |l|^2 \dot  W_l(t) \phi_l,
  \end{equation}where $w=\nabla\wedge u$,   $B(u,w)=\langle u,\nabla \rangle w$,
  and  $\KK$ is the Biot--Savart operator
  $$
   \KK w=\sum_{l\in \Z_*^2} |l|^{-2} l^\bot w_{-l} \phi_l
  $$
 with $|l|^2=l_1^2+l_2^2$, $l^\bot=(-l_2,l_1)$,   $w_l=\lag w,\phi_l\rag$, and
 $$
\phi_l(x)=
\begin{cases}
 \sin \lag l,x\rag& \text{if }l_1>0\text{ or } l_1=0,\, l_2>0, \\  -\cos\lag l,x\rag   & \text{if }l_1<0\text{ or } l_1=0,\, l_2<0, \quad l =(l_1,l_2).
 \end{cases}
$$
  The operator $\KK$ is continuous from $H^s(\T^2;\R)$ to $H^{s+1}(\T^2;\R^2)$ for any~$s\in \R$; it~allows to recover   the velocity field   from the vorticity via $u=\KK w$.

     We consider Eq.~\eqref{2.1} in the space  $\tilde H$  of real-valued square-integrable functions on~$\mathbb{T}^2$ with zero mean value (see \eqref{0.6}); it    is endowed with the $L^2$ norm $ \|\cdot\|.$ Since~the underlying probability space  plays no role,
       without loss of generality,  we~can assume that   $\Omega$ is the  Wiener space, $W(t)=\{W_l(t)\}_{l\in\KK}$ is the canonical process, and $\pP$ is the Wiener measure.~Furthermore,  we   denote by $\{\theta_l\}_{l\in\KK}$ the standard basis in  $\R^d$ with $d=|\KK|$, and define a
  linear map $Q:\R^d\to  \tilde H$   by~$Q\theta_l=b_l |l|^2\phi_l.$
Let~$w_t=\Phi(t,w,W_\cdot)$ be~the solution of  Eq.~\eqref{2.1} with initial value $w(0)=w\in \tilde{H}$.
For any $0\le s\le t$ and $\xi\in \tilde H$, let~$J_{s,t}\xi$  be the   solution of the linearised problem:
\begin{align}
	\partial_t  J_{s,t}\xi-\nu \Delta J_{s,t}\xi +\tilde{B}(w_t,J_{s,t}\xi)&=0, \label{2.2}
  \\  J_{s,s}\xi&=\xi, \nonumber
\end{align}
where $\tilde{B}(w,v)=B(\KK w,v)+B(\KK v,w)$.

Recall that, for given   $T>0$ and $v\in L^2([0,T];\R^d),$
the Malliavin derivative of~$w_t$ in the direction~$v$ is defined by
$$
\cD^v w_t=\lim_{\eps \to  0}\frac{1}{\e}
  \left(\Phi(t,w_0,W+\eps \int_0^\cdot v\dd  s)-\Phi(t,w_0,W)\right), \quad
$$
  where the limit   holds almost surely (e.g., see the book~\cite{nualart2006} for finite-dimensional setting or the papers~\cite{MP-2006, HM-2006, HM-2011, FGRT-2015}   for   Hilbert space~case).~By the Riesz representation theorem, there is a linear operator $  \cD:L^2(\Omega,  \tilde H)\to L^2(\Omega; L^2([0,T];\R^d)\otimes  \tilde H)$ such that
\begin{equation}\label{2.3}
	  \cD^v w =\lag    \cD w,v \rag_{L^2([0,T];\R^d)}.
\end{equation}
On the other hand,  we have
\begin{equation}\label{2.4}
	\cD^v w_t=\cA_{0,t}v,
\end{equation} where   $\cA_{s,t}:L^2([s,t];\R^{d})\to  \tilde H$  is  the random operator defined  by
\begin{equation}
\label{2.5}
\cA_{s,t}v=\int_{s}^{t}J_{r,t}Qv(r)\dd  r, \quad 0\le s\le t\le T,
\end{equation}i.e.,  $\cA_{s,t}v$ is
  the solution of the linearised problem with a source term:
\begin{align*}
	\partial_t  \cA_{s,t}v-\nu \Delta \cA_{s,t}v +\tilde{B}(w_t,\cA_{s,t}v)&=Qv,
  \\  \cA_{s,s}v&=0.
\end{align*}
The adjoint $\cA_{s,t}^*:\tilde H \to  L^2([s,t];\R^{d})$      is given by
$$
(\cA_{s,t}^*\xi)(r)=Q^{*}J_{r,t}^*\xi, \quad  \xi\in \tilde H,\, r\in [s,t],
$$
where $Q^*:\tilde H\rightarrow \R^{d}$ is the adjoint of $Q$.

Let us
 denote by $J^{(2)}_{s,t}(\phi,\psi)$ the second derivative of $w_t$ with respect to $w$ in the directions of $\phi$ and $\psi$. It is the solution of the problem
\begin{align*}
   \partial_t J^{(2)}_{s,t}(\phi,\psi)- \nu \Delta J^{(2)}_{s,t}(\phi,\psi)+\tilde{B}(J_{s,t}\phi,J_{s,t}\psi)
  +\tilde{B}(w_t,J^{(2)}_{s,t}(\phi,\psi))&=0,
  \\  J^{(2)}_{s,s}(\phi,\psi)&=0.
\end{align*}The next lemma follows from Lemma 4.10 in \cite{HM-2006}.
 \begin{lemma}
\label{L:2.1}
  For any $\kappa,p>0$, $0\le  \tau<   T$, and $w\in \tilde H$, we have
  \begin{align}
    \label{2.6}
    \E_w \sup_{s<t\in  [\tau,T]}\|J_{s,t}\|^p_{\cL( \tilde H,\tilde{H})}&\le    C   \exp\{\kappa\|w\|^2\},
    \\
\nonumber
    \E_w   \sup_{s<t\in [\tau,T]} ||| J^{(2)}_{s,t} |||^p &\le    C   \exp\{\kappa\|w\|^2\} ,
  \end{align}
  where $ ||| J^{(2)}_{s,t} |||=\sup_{\|\phi\|,\|\psi\|\le1}  
   \|J^{(2)}_{s,t}(\phi,\psi)\|$, and $C=C(\kappa, p,T-\tau,  \mathfrak{B}_0)>0 $ is a constant.
  \end{lemma}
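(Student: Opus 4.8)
The plan is to reduce both inequalities to a single pathwise bound — $\|J_{s,t}\|$ and $|||J^{(2)}_{s,t}|||$ are controlled by $e^{C\Psi}$ for a \emph{sub-quadratic} functional $\Psi=\Psi_{\tau,T}(w_\cdot)$ of the vorticity trajectory — and then to estimate $\E_w e^{q\Psi}$ by the classical exponential-moment bounds for the two-dimensional stochastic Navier--Stokes enstrophy. Everything below is contained in the proof of Lemma~4.10 in \cite{HM-2006}, which the statement cites; I only describe the structure.

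\emph{Step 1 (the Jacobian).} Writing $v_t=J_{s,t}\xi$ and pairing \eqref{2.2} with $v_t$ in $\tilde H$, the transport term $B(\KK w_t,v_t)$ drops out since $\KK w_t$ is divergence free, and for the stretching term I would use that Biot--Savart gains one derivative ($\|\KK v\|_{L^4}\le C\|v\|$), the Ladyzhenskaya inequality $\|v\|_{L^4}\le C\|v\|^{1/2}\|\nabla v\|^{1/2}$, and Young's inequality, to obtain
\begin{equation*}
\tfrac{d}{dt}\|v_t\|^2+\nu\|\nabla v_t\|^2\le \tfrac{C}{\nu^{1/3}}\,\|\nabla w_t\|^{4/3}\,\|v_t\|^2 .
\end{equation*}
Gronwall then gives $\|J_{s,t}\|_{\cL(\tilde H,\tilde H)}\le e^{\Psi/2}$ for all $\tau\le s<t\le T$, with $\Psi:=\frac{C}{\nu^{1/3}}\int_\tau^T\|\nabla w_r\|^{4/3}\,dr$, and, integrating the same differential inequality, the parabolic-smoothing bound $\nu\int_s^t\|\nabla J_{s,r}\xi\|^2\,dr\le\|\xi\|^2(1+\Psi e^{\Psi})$. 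Note that the exponent $4/3<2$ of $\|\nabla w_t\|$ is the structural fact that makes Step~3 work.

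\emph{Step 2 (the second variation).} For $z_t=J^{(2)}_{s,t}(\phi,\psi)$ with $\|\phi\|,\|\psi\|\le1$, I would pair the equation for $J^{(2)}$ with $z_t$: the terms containing $w_t$ and $z_t$ are handled exactly as in Step~1, while the quadratic source is bounded by $|\langle\tilde{B}(J_{s,t}\phi,J_{s,t}\psi),z_t\rangle|\le\|\tilde{B}(J_{s,t}\phi,J_{s,t}\psi)\|_{H^{-1}}\|z_t\|_{H^1}$ together with $\|\tilde{B}(a,b)\|_{H^{-1}}\le C\|a\|\,\|b\|^{1/2}\|\nabla b\|^{1/2}+C\|b\|\,\|a\|^{1/2}\|\nabla a\|^{1/2}$. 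Inserting the Step~1 bounds for $\|J_{s,\cdot}\phi\|$, $\|J_{s,\cdot}\psi\|$ and for $\int_s^t\|\nabla J_{s,r}\cdot\|^2\,dr$, absorbing $\|\nabla z_t\|^2$, and applying Gronwall with $z_s=0$ gives $\sup_{s<t\in[\tau,T]}|||J^{(2)}_{s,t}|||\le C(\nu,T-\tau)\,e^{C\Psi}$ pathwise. I expect this to be the most laborious but entirely routine part: tracking the several polynomial-in-$\Psi$ prefactors and folding them into $e^{C\Psi}$.

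\emph{Step 3 (exponential moments), the main point.} By Hölder, $\Psi\le c_1\bigl(\int_\tau^T\|\nabla w_r\|^2\,dr\bigr)^{2/3}$ with $c_1=c_1(\nu,T-\tau)$, so it remains to prove that for every $q>0$ and $\kappa>0$ there is $C=C(\kappa,q,T-\tau,\mathfrak B_0)$ with $\E_w\exp\!\bigl(q\bigl(\int_\tau^T\|\nabla w_r\|^2\,dr\bigr)^{2/3}\bigr)\le C e^{\kappa\|w\|^2}$. Conditioning on $\FF_\tau$ and using time-homogeneity reduces this to $\tau=0$. From the enstrophy balance $d\|w\|^2+2\nu\|\nabla w\|^2\,dt=\|Q\|_{\mathrm{HS}}^2\,dt+2\langle w,Q\,dW\rangle$ one gets $2\nu\int_0^T\|\nabla w_r\|^2\,dr\le\|w\|^2+\|Q\|_{\mathrm{HS}}^2T+2M_T$, with $\langle M\rangle_T\le\|Q^*\|^2\int_0^T\|\nabla w_r\|^2\,dr$ by the Poincaré inequality (whose constant on $\T^2$ equals $1$). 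Since $x^{2/3}\le\e x+C_\e$ for every $\e>0$, the exponent is dominated by $\e(\|w\|^2+\|Q\|_{\mathrm{HS}}^2T+2M_T)+C_\e$; a standard exponential-martingale bootstrap then bounds $\E_w e^{2\e M_T}$ by $C\exp(\e'\|w\|^2)$ with $\e'=\e'(\e)\to0$ as $\e\to0$, and combining with a standard exponential bound $\E_w\exp(\e\|w_\tau\|^2)\le C\exp(\e\|w\|^2)+C$ (valid for $\e$ small) one chooses $\e$ small enough that the total coefficient of $\|w\|^2$ falls below $\kappa$. The genuine subtlety here — and the reason the lemma can afford an \emph{arbitrarily} small $\kappa$ — is that $\Psi$ depends on the enstrophy dissipation $\int\|\nabla w_r\|^2$ only through the sublinear power $2/3$, so Young's inequality trades a large additive constant $C_\e$ for an arbitrarily small multiplier of $\|w\|^2$.
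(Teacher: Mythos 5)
Your proposal is correct and matches the paper's approach: the paper proves Lemma~\ref{L:2.1} simply by invoking Lemma~4.10 of \cite{HM-2006}, and your three steps (the $\|\nabla w\|^{4/3}$ Gronwall bound for $J_{s,t}$, the analogous energy estimate for $J^{(2)}_{s,t}$, and exponential moments of $\bigl(\int\|\nabla w_r\|^2\,\dd r\bigr)^{2/3}$ via the enstrophy balance and an exponential-martingale argument, with the sublinear power $2/3$ yielding an arbitrarily small $\kappa$) are a faithful reconstruction of that cited argument. No discrepancies to report.
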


For any  $0\le  s< t,$     the Malliavin operator    is defined by
$$
\cM_{s,t}=\cA_{s,t}\cA_{s,t}^*:\tilde{H}\rightarrow \tilde{H}.
$$ It is a
  non-negative self-adjoint operator, so its regularisation~$\cM_{s,t}+\beta\I$
is invertible for any  $\beta>0$. Here  $\I$ is the identity.
The following lemma gathers some estimates from Section~4.8 in~\cite{HM-2006}  and Lemma~A.6 in~\cite{FGRT-2015}.
\begin{lemma}
\label{L:2.2}There is a constant $C=C(\mathfrak{B}_0)>0$ such that, for any
  $0\le s<t$, $\beta>0$, and~$w\in \tilde H$, we have
  \begin{gather}
   \label{2.7}  \|\cA_{s,t}\|^2_{\cL(L^2([s,t];\R^d),\tilde{H})} \le  C \int_s^t\|J_{r,t}\|_{\cL( \tilde H,\tilde{H})}^2\dd  r ,
     \\   \|\cA_{s,t}^*(\cM_{s,t}+\beta\I)^{-1/2}\|_{\cL(\tilde{H},L^2([s,t];\R^d))} \le  1,\label{2.8}
   \\
   \|(\cM_{s,t}+\beta\I)^{-1/2}\cA_{s,t}\|_{\cL(L^2([s,t];\R^d),\tilde{H})} \le  1,\label{2.9}
   \\  \|(\cM_{s,t}+\beta\I)^{-1/2}\|_{\cL(\tilde{H},\tilde{H})} \le  \beta^{-1/2}.\label{2.10}
  \end{gather}
\end{lemma}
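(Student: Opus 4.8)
The plan is to read off \eqref{2.7} from a Cauchy--Schwarz estimate applied to the integral representation \eqref{2.5}, and to obtain the three resolvent bounds \eqref{2.8}--\eqref{2.10} from the elementary functional calculus for a bounded non-negative self-adjoint operator. Fix $0\le s<t$, $\beta>0$, and $w\in\tilde H$, and abbreviate $\cA=\cA_{s,t}$ and $\cM=\cA\cA^*$. First I would record that, by \eqref{2.6}, for $\pP$-a.e.\ realisation the quantity $\int_s^t\|J_{r,t}\|_{\cL(\tilde H,\tilde H)}^2\,\dd r$ is finite, so that $\cA$ is a bounded operator, $\cM$ is a bounded non-negative self-adjoint operator on $\tilde H$, and $\cM+\beta\I$ is boundedly invertible with a bounded self-adjoint square root $(\cM+\beta\I)^{1/2}$ commuting with $\cM$; all the manipulations below then make sense.

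For \eqref{2.7}: from \eqref{2.5}, for every $v\in L^2([s,t];\R^d)$,
\[
\|\cA v\|\le\int_s^t\|J_{r,t}\|_{\cL(\tilde H,\tilde H)}\,\|Q\|_{\cL(\R^d,\tilde H)}\,|v(r)|\,\dd r\le\|Q\|_{\cL(\R^d,\tilde H)}\Big(\int_s^t\|J_{r,t}\|_{\cL(\tilde H,\tilde H)}^2\,\dd r\Big)^{1/2}\|v\|_{L^2([s,t];\R^d)},
\]
using the Cauchy--Schwarz inequality in $r$. Squaring and taking the supremum over $\|v\|_{L^2}\le1$ gives \eqref{2.7} with $C=\|Q\|^2_{\cL(\R^d,\tilde H)}$, a constant determined by the fixed data $\{b_l\}_{l\in\KK}$.

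For \eqref{2.8}: for any $\xi\in\tilde H$, using self-adjointness of $(\cM+\beta\I)^{-1/2}$ and $\cA\cA^*=\cM$,
\[
\|\cA^*(\cM+\beta\I)^{-1/2}\xi\|^2_{L^2([s,t];\R^d)}=\big\langle\cM(\cM+\beta\I)^{-1}\xi,\xi\big\rangle\le\|\xi\|^2,
\]
the last step being the spectral bound $\lambda(\lambda+\beta)^{-1}\le1$ for $\lambda\in\sigma(\cM)\subset[0,\infty)$. Then \eqref{2.9} is immediate, since $(\cM+\beta\I)^{-1/2}\cA$ is the Hilbert-space adjoint of $\cA^*(\cM+\beta\I)^{-1/2}$ and the operator norm is invariant under adjunction. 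Finally, \eqref{2.10} is the spectral bound $\|(\cM+\beta\I)^{-1/2}\|_{\cL(\tilde H,\tilde H)}=\sup_{\lambda\in\sigma(\cM)}(\lambda+\beta)^{-1/2}\le\beta^{-1/2}$, again because $\sigma(\cM)\subset[0,\infty)$.

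I do not expect a genuine obstacle: the lemma is essentially a packaging of standard Hilbert-space facts. The one point deserving explicit attention — and the only place where the specific structure of the problem enters — is the justification that $\cA_{s,t}$ is almost surely bounded, so that $\cM_{s,t}$ is a bona fide bounded self-adjoint operator and the functional-calculus identities above are legitimate; this is precisely what the moment estimate \eqref{2.6} of Lemma~\ref{L:2.1} provides, and it is also the inequality that makes \eqref{2.7} useful in the later application.
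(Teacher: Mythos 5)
Your proof is correct and follows essentially the same route as the paper, which does not spell out an argument of its own but delegates these bounds to Section~4.8 of [HM06] and Lemma~A.6 of [FGRT15], where they are obtained exactly as you do: Cauchy--Schwarz on the representation \eqref{2.5} for \eqref{2.7}, and the spectral calculus for the non-negative self-adjoint operator $\cM_{s,t}$ (plus norm-invariance under adjoints) for \eqref{2.8}--\eqref{2.10}. The only cosmetic point is that your constant $C=\|Q\|^2_{\cL(\R^d,\tilde H)}$ is determined by the finitely many numbers $b_l|l|^2$, $l\in\KK$, rather than literally by $\mathfrak{B}_0$ alone, which is harmless since $\KK$ and the $b_l$ are fixed data.
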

We shall use the notation
$$
  \cD_r F=(\cD F) (r), \quad \cD^j F=(\cD F)^j,\quad \cD_r^j F=(\cD F)^j(r), \quad j=1,\ldots, d.
$$
From the equalities  \eqref{2.3}-\eqref{2.5}   it follows that  $\cD_r^iw_t=J_{r,t}Q\theta_i$, $0\le r\le t$.
From this   and  \eqref{2.2}, we conclude that, for $0\le s<t$,
$$
   \partial_t\cD_r^iJ_{s,t}\xi
   -\nu \Delta\cD_r^iJ_{s,t}\xi+\tilde{B}(w_t, \cD_r^iJ_{s,t}\xi)
  +\tilde{B}( J_{r,t}Q\theta_i,J_{s,t}\xi)=0.
$$
Furthermore, by  the variation of constants formula,   we have
 $$
 \cD_r^iJ_{s,t}\xi=
\begin{cases}
 J^{(2)}_{r,t}( Q\theta_i,J_{s,r}\xi) &  \text{ for } r\geq s, \\  J^{(2)}_{s,t}(J_{r,s}Q\theta_i,\xi)   &   \text{ for } r\le  s.
 \end{cases}
$$
 This equality and Lemma \ref{L:2.1}  imply the following lemma.  For further details, we~refer the reader to  Section~4.8 in~\cite{HM-2006} and Lemma~A.7 in \cite{FGRT-2015}.
\begin{lemma}\label{L:2.3}
The   operators $J_{s,t}$, $\cA_{s,t}$, and $\cA_{s,t}^*$
are Malliavin differentiable, and for any
$\kappa>0$, $r\in[s,t]$,  $p> 0$, and $w\in \tilde H$, the following inequalities~hold
\begin{align}
  \label{2.11}  \E_w  \|\cD_r^iJ_{s,t}\|^p_{\cL(\tilde{H},\tilde{H})} & \le    C \exp\{\kappa \|w\|^2\},
  \\  \label {2.12}
   \E_w  \|\cD_r^i \cA_{s,t}\|^p_{\cL(L^2([s,t];\R^d),\tilde{H})} & \le    C \exp\{\kappa \|w\|^2\},
  \\   \label{2.13}  \E_w  \|\cD_r^i\cA^*_{s,t}\|^p_{\cL(\tilde{H},L^2([s,t];\R^d))} & \le    C \exp\{\kappa \|w\|^2\},
\end{align}
where $C=C(\kappa,p,t-s,\mathfrak{B}_0)>0$.
\end{lemma}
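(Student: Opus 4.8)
The plan is to derive the three estimates \eqref{2.11}--\eqref{2.13} from the variation-of-constants representation of $\cD^i_r J_{s,t}$ recorded just above, combined with the moment bounds of Lemma~\ref{L:2.1}, once the differentiability assertion has been settled. For the latter I would first recall that $w_t=\Phi(t,w,W_\cdot)$ is Malliavin differentiable (as in \cite{MP-2006,HM-2006,FGRT-2015}); since $J_{s,t}$ solves the linear equation \eqref{2.2} whose only random coefficient is $w_t$, entering linearly through $\tilde B(w_t,\cdot)$, a standard closedness/Picard-iteration argument shows that $J_{s,t}\xi$ is Malliavin differentiable with $L^p(\Omega)$-integrable derivative for every $p$, and that $\cD^i_r J_{s,t}\xi$ solves the linear problem displayed just before the lemma; applying the variation of constants to that problem gives the two-case formula for $\cD^i_r J_{s,t}\xi$. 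Differentiability of $\cA_{s,t}$ and $\cA^*_{s,t}$ is then inherited through \eqref{2.5}, since they are built from $J_{\cdot,t}$ by deterministic time integration and by passing to the adjoint, operations that commute with $\cD$ (the former after approximating the time integral by Riemann sums, the latter because $\langle J^*_{r,t}\xi,\zeta\rangle=\langle\xi,J_{r,t}\zeta\rangle$).

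For \eqref{2.11}: since $r\in[s,t]$, only the case $r\ge s$ of the variation-of-constants formula occurs, so
\[
\|\cD^i_r J_{s,t}\|_{\cL(\tilde H,\tilde H)}\le \|Q\theta_i\|\;|||J^{(2)}_{r,t}|||\;\|J_{s,r}\|_{\cL(\tilde H,\tilde H)},
\]
where $\|Q\theta_i\|$ is a finite constant depending only on the data $\KK$, $\{b_l\}$. Raising to the power $p$, bounding each random factor by its supremum over $s'<t'\in[s,t]$, applying the Cauchy--Schwarz inequality on $\Omega$, and invoking Lemma~\ref{L:2.1} with $\tau=s$, $T=t$, exponent $2p$, and $\kappa$ replaced by $\kappa/2$, I obtain $\E_w\|\cD^i_r J_{s,t}\|^p_{\cL(\tilde H,\tilde H)}\le C\exp\{\kappa\|w\|^2\}$ with $C=C(\kappa,p,t-s,\mathfrak{B}_0)$.

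For \eqref{2.12} and \eqref{2.13}: differentiating $\cA_{s,t}v=\int_s^t J_{r',t}Qv(r')\dd r'$ yields $\cD^i_r(\cA_{s,t}v)=\int_s^t(\cD^i_r J_{r',t})Qv(r')\dd r'$, so by the Cauchy--Schwarz inequality in $r'$,
\[
\|\cD^i_r\cA_{s,t}\|_{\cL(L^2([s,t];\R^d),\tilde H)}\le \|Q\|\,(t-s)^{1/2}\sup_{r'\in[s,t]}\|\cD^i_r J_{r',t}\|_{\cL(\tilde H,\tilde H)};
\]
likewise, using $(\cD^i_r\cA^*_{s,t}\xi)(r')=Q^*(\cD^i_r J_{r',t})^*\xi$ and $\|(\cD^i_r J_{r',t})^*\|_{\cL}=\|\cD^i_r J_{r',t}\|_{\cL}$, one gets the same bound for $\|\cD^i_r\cA^*_{s,t}\|_{\cL(\tilde H,L^2([s,t];\R^d))}$ with $\|Q^*\|$ in place of $\|Q\|$. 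Here both branches $r'\le r$ and $r'\ge r$ of the variation-of-constants formula can occur, but each is again of the form $\|Q\theta_i\|\cdot|||J^{(2)}_{\star,t}|||\cdot\|J_{\star,\star}\|_{\cL}$ with all time arguments in $[s,t]$, so $\sup_{r'\in[s,t]}\|\cD^i_r J_{r',t}\|_{\cL}$ is dominated by a constant times $\sup_{s'<t'\in[s,t]}|||J^{(2)}_{s',t'}|||\cdot\sup_{s'<t'\in[s,t]}\|J_{s',t'}\|_{\cL}$; raising to the power $p$, Cauchy--Schwarz on $\Omega$, and Lemma~\ref{L:2.1} then finish the proof exactly as before.

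I expect the main obstacle to be not the estimates — which are routine once the representation of $\cD^i_r J_{s,t}$ is available — but the rigorous verification of the Malliavin differentiability of $J_{s,t}$ and of the formula for its derivative. This is where one must invoke, or reproduce, the fixed-point/closedness arguments of \cite[Section~4.8]{HM-2006} and \cite[Lemma~A.7]{FGRT-2015}; the delicate point is that the convergence of the relevant Picard iteration in the Malliavin--Sobolev topology relies precisely on the a priori $L^p$-bounds supplied by Lemma~\ref{L:2.1}.
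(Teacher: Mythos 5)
Your proposal is correct and follows essentially the same route as the paper: the paper deduces the lemma directly from the two-case variation-of-constants formula for $\cD_r^i J_{s,t}\xi$ together with the moment bounds of Lemma~\ref{L:2.1}, deferring the rigorous Malliavin differentiability to Section~4.8 of \cite{HM-2006} and Lemma~A.7 of \cite{FGRT-2015}, exactly as you do. Your explicit Cauchy--Schwarz bookkeeping (halving $\kappa$, doubling $p$, and noting that all time arguments stay in $[s,t]$) is just the detail the paper leaves implicit.
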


\section{Proof of  uniform Feller property} \label{S:3}

\subsection{Reduction to a gradient estimate} \label{S:3.1}

 The aim of this  section is to  prove  the following proposition.
\begin{proposition}\label{P:3.1}
Under Condition~\hyperlink{H}{\rm(H)},  for any $V,  \psi\in C_b^1(H)$, there is a number $ R_0=R_0(V)>0$ such that
 the family   $\left\{\|\PPPP_t^V\mek\|_R^{-1} \PPPP_t^V\psi,t\geq 0 \right\}$
is uniformly equicontinuous  on    $X_R$ for any  $R\geq R_0.$
\end{proposition}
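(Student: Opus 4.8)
First I would reduce Proposition~\ref{P:3.1} to a pointwise gradient estimate. By the smoothing and differentiability properties of the flow of~\eqref{0.3}, the bounds of Lemma~\ref{L:2.1}, and the boundedness of $V$, one has $\PPPP_t^V\psi\in C^1(H)$ for every $t\ge0$, with
$$
\lag\nabla\PPPP_t^V\psi(u),\xi\rag=\E_u\!\left[e^{\int_0^tV(u_s)\dd s}\Big(\lag\nabla\psi(u_t),J_{0,t}\xi\rag+\psi(u_t)\!\int_0^t\!\lag\nabla V(u_s),J_{0,s}\xi\rag\dd s\Big)\right],\quad\xi\in H,
$$
the derivative being in the initial condition and $s\mapsto J_{0,s}\xi$ the linearisation of the flow along $\xi$. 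I claim Proposition~\ref{P:3.1} follows once we prove the \emph{gradient estimate}: there is $R_0=R_0(V)>0$ such that for each $R\ge R_0$ there is $C=C(V,R)$ with
\begin{equation}\label{E:gradest}
\|\nabla\PPPP_t^V\psi(u)\|\le C\big(\|\psi\|_\ty+\|\nabla\psi\|_\ty\big)\,\|\PPPP_t^V\mek\|_R\qquad\text{for all }t\ge0,\ u\in X_R.
\end{equation}
Indeed, for $R\ge R_0$ and $u,u'\in X_R$ the segment $u_\theta=(1-\theta)u+\theta u'$ remains in the convex set $X_R=B_{H^2}(R)$, so, as $\|\PPPP_t^V\mek\|_R\ge e^{-t\|V\|_\ty}>0$,
$$
\frac{|\PPPP_t^V\psi(u)-\PPPP_t^V\psi(u')|}{\|\PPPP_t^V\mek\|_R}\le\frac{1}{\|\PPPP_t^V\mek\|_R}\int_0^1\!\big|\lag\nabla\PPPP_t^V\psi(u_\theta),u-u'\rag\big|\,\dd\theta\le C\big(\|\psi\|_\ty+\|\nabla\psi\|_\ty\big)\|u-u'\|
$$
by~\eqref{E:gradest}; since the bound does not depend on $t$, given $\e>0$ it suffices to take $\|u-u'\|<\delta:=\e\,[C(\|\psi\|_\ty+\|\nabla\psi\|_\ty)+1]^{-1}$ to obtain the required uniform equicontinuity on $X_R$.

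To prove~\eqref{E:gradest} I would pass to the vorticity formulation~\eqref{2.1} and use the Malliavin analysis of~\cite{HM-2006,FGRT-2015}. Following those papers, one constructs, recursively over the unit intervals $[k,k+1]$, a control $v\in L^2([0,t];\R^d)$ whose residuals $\rho_s:=J_{0,s}\xi-\cA_{0,s}v$ decay, $\E_w\|\rho_s\|^2\le Ce^{-\gamma_0 s}\mm_\kappa(w)\|\xi\|^2$, with controlled cost $\E_w\|v\|_{L^2([0,t];\R^d)}^2\le C\mm_\kappa(w)\|\xi\|^2$; this uses~\eqref{2.6}--\eqref{2.13}, the lower bound for the Malliavin matrix $\cM_{s,t}$, the Foias--Prodi estimate, and a tuning of the construction ensuring that the decay rate $\gamma_0$ is large. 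Since $\cA_{0,s}v=\cD^vw_s$, substituting $J_{0,s}\xi=\cA_{0,s}v+\rho_s$ into the identity above gives
$$
\lag\nabla\PPPP_t^V\psi(w),\xi\rag=\E_w\!\big[\cD^vF\big]+\E_w\!\left[e^{\int_0^tV(w_s)\dd s}\Big(\lag\nabla\psi(w_t),\rho_t\rag+\psi(w_t)\!\int_0^t\!\lag\nabla V(w_s),\rho_s\rag\dd s\Big)\right],
$$
with $F=\exp(\int_0^tV(w_s)\dd s)\psi(w_t)$; the first term is treated by the Malliavin integration-by-parts formula $\E_w[\cD^vF]=\E_w[F\,\delta(v)]$, the second by the decay of $\rho_s$.

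The main obstacle, and the point at which the argument departs essentially from~\cite{MP-2006,HM-2006,HM-2011}, is the non-Markovian Feynman--Kac weight $\exp(\int_0^tV(w_s)\dd s)$, which has no Girsanov representation. A naive Cauchy--Schwarz in the terms above pulls out the factor $\PPPP_t^{2V}\mek$, which by Jensen's inequality dominates $(\PPPP_t^V\mek)^2$ and, generically, has strictly larger exponential growth rate, so it overwhelms the normalisation $\|\PPPP_t^V\mek\|_R$ in~\eqref{E:gradest}. The remedy is never to separate the weight from a function of the current state: writing $v=\sum_k v_k$ with $v_k$ supported on $[k,k+1]$ and using the tower property, one finds $\E_w[F\,\delta(v_k)]=\E_w\big[\delta(v_k)\,e^{\int_0^{k+1}V(w_s)\dd s}(\PPPP_{t-k-1}^V\psi)(w_{k+1})\big]$, where $|e^{\int_0^{k+1}V}(\PPPP_{t-k-1}^V\psi)(w_{k+1})|\le\|\psi\|_\ty N_{k+1}$ with $N_s:=\exp(\int_0^sV(w_r)\dd r)(\PPPP_{t-s}^V\mek)(w_s)$ a positive martingale, $\E_wN_s=\PPPP_t^V\mek(w)$; on each interval the weight is bounded by $e^{\|V\|_\ty}$, while $(\PPPP_{t-k-1}^V\mek)(w_{k+1})$ is absorbed by the growth properties~\eqref{1.3}, which give $(\PPPP_s^V\mek)(w)\le C\mm_\gamma(w)\|\PPPP_s^V\mek\|_{R_0}$. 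Using in addition that $\delta(v_k)$ is $\FF_k$-conditionally centred, $\E_w[\delta(v_k)|\FF_k]=0$ (so only the martingale increment of $N$ over $[k,k+1]$ contributes), and that the control costs decay geometrically, one sums over $k$; the residual terms are estimated similarly via their decay and~\eqref{1.3} applied also to $2V$. Choosing $\gamma_0$ larger than a finite $V$-dependent threshold makes all the resulting geometric series converge and yields $\|\nabla\PPPP_t^V\psi(w)\|\le C(V)\mm_\kappa(w)(\|\psi\|_\ty+\|\nabla\psi\|_\ty)\|\PPPP_t^V\mek\|_{R_0}$; restricting to $w\in X_R$, where $\mm_\kappa(w)$ is bounded and $\|\PPPP_t^V\mek\|_{R_0}\le\|\PPPP_t^V\mek\|_R$, this gives~\eqref{E:gradest}, hence Proposition~\ref{P:3.1}. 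Carrying out the recursive Malliavin-control construction and these weighted estimates in full is the technical core of what follows.
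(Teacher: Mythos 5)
Your proposal is essentially the paper's own argument: the paper likewise reduces Proposition~\ref{P:3.1} to a gradient estimate for the Feynman--Kac semigroup (Proposition~\ref{P:3.2}, inequality \eqref{3.1}), proved in the vorticity formulation via the Hairer--Mattingly Malliavin control built over unit intervals, with the non-Markovian weight handled exactly as you describe --- conditioning at integer times, bounding $\tilde\PPPP^{\tilde V}_{t-n}\tilde\psi(w_n)$ by $\|\psi\|_\ty$ times the growth bound \eqref{1.3}, exploiting the decay of the residual and of the Skorokhod integrals, and choosing the decay rate large compared with $\|V\|_\ty$ so the geometric series converge. The only adjustment: the vorticity-level Malliavin analysis controls the derivative against the $\tilde H$-norm of the perturbation (i.e.\ the $H^1$-norm of the velocity perturbation), not the $L^2$-norm appearing in your displayed gradient estimate, so the paper inserts the interpolation $\|u_1-u_2\|_1\le\|u_1-u_2\|^{1/2}\|u_1-u_2\|_2^{1/2}$ on $X_R=B_{H^2}(R)$ and obtains H\"older-$1/2$ (rather than Lipschitz) continuity in $\|\cdot\|$, which is all that uniform equicontinuity requires.
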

\begin{proof} For any $V,\psi\in C_b^1(H),$ let us define  functions
 $\tilde V, \tilde \psi \in C_b^1(\tilde H)$
 by
  $\tilde{V}(w)= V(\KK w)$
 and $ \tilde{\psi}(w)= \psi(\KK w)$, $ w\in\tilde H$.
The     Feynman--Kac semigroup  associated with  Eq.~\eqref{2.1}  is~given~by
$$\tilde \PPPP_t^{\tilde V} \tilde{\psi}(w)=\E_w \left\{ \exp\left(\int_0^t\tilde V (w_s) \dd s\right) \tilde \psi (w_t)\right\}, \quad \tilde \PPPP_t^{\tilde V}: C_b^1(\tilde H )\to C_b^1(\tilde H).
$$In what follows,
the number $R_0$ is chosen such that the growth properties~\eqref{1.3} hold.
 In  the next   subsection, we   prove the following proposition  (cf.~Proposition~4.3 in~\cite{HM-2006}).
\begin{proposition}\label{P:3.2}
Under the conditions of Proposition \ref{P:3.1},
  for any numbers  $ \kappa>0$ and $ a\in (0,1)$, there is a constant  $C=C(\kappa,a,\|\nabla V\|_\ty,\|V\|_\ty )>0$ such that
 \begin{equation}\label{3.1}
 \|\nabla_\xi \tilde{\PPPP}_t^{\tilde V}\tilde \psi(w)\|
 \le
  C \exp\{\kappa \|w\|^2\}\|\PPPP_t^V\mek\|_{R_0} \left[
 \|\nabla \psi\|_\ty a^t+\|\psi\|_\ty \right]\|\xi\|
 \end{equation}
 for any  $w,\xi\in \tilde H$ and  $ t\geq 0$.  Here
 $\nabla_\xi$  is the derivative with respect
to the initial condition in the direction $\xi$.
\end{proposition}This  result implies Proposition \ref{P:3.1}. Indeed, let us take any~$u_1,u_2\in X_R$ and set~$w_i=\nabla\wedge u_i$,  $i=1,2$.
Using inequality \eqref{3.1} with any~$\kappa>0$ and $a\in (0,1)$
  and an interpolation inequality, we see that
\begin{align*}
 \big| \PPPP_t^V\psi(u_1)-\PPPP_t^V\psi(u_2)\big|
& = \big| \tilde{\PPPP}_t^{\tilde V} \tilde{\psi}(w_1)-
\tilde{\PPPP}_t^{\tilde V}\tilde{\psi}(w_2)\big|
 \\ & \le  C  \|\PPPP_t^V\textbf{1}\|_{R_0} \|w_1-w_2\|
\\ & \le  C \|\PPPP_t^V\textbf{1}\|_{R_0} \|u_1-u_2\|_1
 \\ & \le  C \|\PPPP_t^V\textbf{1}\|_{R_0} \|u_1-u_2\|^{1/2}\|u_1-u_2\|_2^{1/2}
\\ &\le  C \|\PPPP_t^V\textbf{1}\|_{R_0} \|u_1-u_2\|^{1/2},
\end{align*}
where    $C=C(R,\|V\|_\ty,\|\nabla V\|_\ty,\|\psi\|_\ty,\|\nabla \psi\|_\ty)>0.$
This~completes the proof of  Proposition \ref{P:3.1}.
\end{proof}

\subsection{Proof of Proposition \ref{P:3.2}}\label{S:3.2}

Let us take any  $\xi\in \tilde{H}$ with $\|\xi\|=1$, denote
$$
\Xi_t    = \exp\left(\int_0^t \tilde V(w_s) \dd s\right),
$$and compute the derivative of $\tilde \PPPP_t^{\tilde V} \tilde{\psi}(w)$ with respect to $w$ in the direction $\xi$:
\begin{equation}
   \nabla_\xi \tilde\PPPP_t^{\tilde{V}}\tilde{\psi}(w) =  \E_w\left[\Xi_t \tilde{\psi}( w_t)\int_0^t \nabla \tilde{V}(w_s) J_{0,s}  \xi \dd  s
  +\Xi_t \nabla\tilde{\psi}(w_t) J_{0,t}\xi\right].
 \label{3.2}
\end{equation}
 Inspired by the papers~\cite{HM-2006, HM-2011}, the idea of the proof of   Proposition \ref{P:3.2} is~to approximate the perturbation
  $J_{0,t}\xi$   caused by the perturbation  $\xi$ of the initial condition with a variation $\cA_{0,t}v$ coming from a variation   of the noise      by an appropriate   process $v$. Let us denote by $\rho_t$ the  residual error between~$J_{0,t}\xi$    and~$\cA_{0,t}v$:
$$
  \rho_t=J_{0,t}\xi-\cA_{0,t}v,
$$ replace the term $J_{0,t}\xi$ in \eqref{3.2}  by $\cA_{0,t}v+   \rho_t$, and recall that $\cA_{0,t}v=\cD^vw_t$ is the Malliavin derivative of~$w_t$ in the direction~$v$. Then, at least formally, using the Malliavin
chain rule (see Proposition~1.2.3 in~\cite{nualart2006}), we have
\begin{align}
  \nonumber  \nabla_\xi \tilde\PPPP_t^{\tilde{V}}\tilde{\psi}(w)    & =  \E_w\left[\Xi_t \tilde{\psi}(w_t)\int_0^t \nabla \tilde{V}(w_s)\cD^vw_s  \dd  s
  +\Xi_t \nabla \tilde{\psi}(w_t)\cD^vw_t \right]
 \\    \nonumber  &\quad + \E_w\left[\Xi_t\tilde{\psi}(w_t)\int_0^t \nabla \tilde{V}(w_s)\rho_s  \dd  s
 +\Xi_t\nabla \tilde{\psi}(w_t)\rho_t \right]
 \\    \nonumber   &=\E_w \left[\cD^v \left(\Xi_t\tilde{\psi}(w_t)\right)\right]
 + \E_w\left[\Xi_t\tilde{\psi}(w_t)\int_0^t \nabla \tilde{V}(w_s)\rho_s  \dd  s\right]
 \\       &\quad +\E_w\left[\Xi_t\nabla \tilde{\psi}(w_t)\rho_t \right]
 =I_1+I_2+I_3. \label{3.3}
\end{align}The term $I_1$ is treated using Malliavin integration by parts formula
  (see Lemma~1.2.1 in \cite{nualart2006}):
  \begin{equation}\label{3.4}
   I_1   = \E_w \left[\Xi_t\tilde{\psi}(w_t) \int_0^t v(s)\dd W(s) \right],
\end{equation}where  the  stochastic integral $ \int_0^t v(s)\dd W(s) $ is   in the Skorokhod sense.~The goal is  to choose the process~$v$ in a such way that   the terms~$I_i$, $i=1,2,3$ are bounded by the right-hand side of inequality~\eqref{3.1}.~We use the same choice of~$v$ as in the papers~\cite{HM-2006, HM-2011}.~More precisely, for any integer $n\ge0$,   the~restriction~$ v_{n,n+1}$ of the process $v$  to the time interval $[n,n+1]$   is defined by
  \begin{equation}
\label{3.5}
 v_{n,n+1}(t)=\begin{cases}  \big(\cA_{n,n+1/2}^*\left(\cM_{n,n+1/2}+\beta \I\right)^{-1}
  J_{n,n+1/2}\rho_n\big)(t),  \,\,\,\,\quad t \in [n,n+1/2], \\ 0,\qquad \qquad \qquad \qquad \qquad \qquad \qquad \qquad \qquad  \quad    t\in [n+1/2,n+1], \end{cases}	
\end{equation}where  we set $\rho_0=\xi$ and   $\beta>0$ is a small parameter. This choice allows to have an exponential decay for the moments of $\rho_t$ and of the Skorokhod integral as proved in the following   lemmas. Inequality~\eqref{3.1} is proved by combining these lemmas (with an appropriate choice of parameters therein) and using a  growth property of the Feynman--Kac semigroup.

The following two lemmas are
  versions of Propositions~4.13 and~4.14 in~\cite{HM-2006}.
Since their formulations   differ from the original ones, we give rather  detailed~proofs based on the estimates recalled in Section \ref{S:2}.
\begin{lemma}
\label{L:3.3}
  For any $\kappa>0$ and $\alpha>0$,  there are    constants  $\beta=\beta(\kappa,\alpha)>0$  and~$C=C(\kappa,\alpha)>0$  such that
  \begin{equation}
  \label{3.6}
    \E\|\rho_t\|^4 \le  C  \exp\left\{\kappa \|w\|^2-\alpha t \right\}\quad \text{for any $w\in \tilde H$ and $t\ge0$.}
  \end{equation}
\end{lemma}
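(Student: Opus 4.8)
The plan is to derive an exact recursion for $\rho_t$ over the unit time steps, to control the resulting random operator using the estimates of Section~\ref{S:2} and the quantitative non-degeneracy of the Malliavin matrix under Condition~\hyperlink{H}{\rm(H)}, and then to iterate with the help of the dissipativity bound~\eqref{1.1}. Unlike the other ingredients of Proposition~\ref{P:3.2}, the process $\rho_t$ carries no Feynman--Kac weight, so this step is close to the argument in~\cite{HM-2006}. First I would set up the recursion: for $t\ge n$ the cocycle identities $J_{0,t}=J_{n,t}J_{0,n}$ and $\cA_{0,t}v=J_{n,t}\cA_{0,n}v+\cA_{n,t}v$ give $\rho_t=J_{n,t}\rho_n-\cA_{n,t}v$; since $v$ vanishes on $[n+1/2,n+1]$, one has $\cA_{n,t}v=J_{n+1/2,t}\cA_{n,n+1/2}v$ for $t\in[n+1/2,n+1]$, and the choice~\eqref{3.5} together with $\cA_{n,n+1/2}\cA_{n,n+1/2}^{*}=\cM_{n,n+1/2}$ gives the algebraic identity
$$
J_{n,n+1/2}\rho_n-\cA_{n,n+1/2}v=\beta\,(\cM_{n,n+1/2}+\beta\I)^{-1}J_{n,n+1/2}\rho_n .
$$
Hence $\rho_t=\beta\,J_{n+1/2,t}(\cM_{n,n+1/2}+\beta\I)^{-1}J_{n,n+1/2}\rho_n$ for $t\in[n+1/2,n+1]$, in particular $\rho_{n+1}=\Theta_n\rho_n$ with $\Theta_n=\beta\,J_{n+1/2,n+1}(\cM_{n,n+1/2}+\beta\I)^{-1}J_{n,n+1/2}$, and a similar formula covers $t\in[n,n+1/2]$. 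The operator $\Theta_n$ depends only on $w_n$ and the increments of $W$ on $[n,n+1]$, while $\rho_n$ is $\FF_n$-measurable.

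The core of the argument is a one-step estimate: for every small $\e>0$ and $\kappa>0$ there is $\beta=\beta(\e,\kappa)>0$ such that
$$
\E\big[\,\|\rho_{n+1}\|^{4}\,\big|\,\FF_n\,\big]\le\e\, e^{\kappa\|w_n\|^{2}}\|\rho_n\|^{4}\quad\text{a.s.},
$$
and likewise with every even moment in place of the fourth. By the recursion $\rho_{n+1}=\Theta_n\rho_n$ and the Markov property this reduces, after absorbing the moments of $\|J_{1/2,1}\|_{\cL(\tilde H,\tilde H)}$ from Lemma~\ref{L:2.1} via H\"older's inequality and~\eqref{1.1}, to a bound $\E_w\|\beta(\cM_{0,1/2}+\beta\I)^{-1}J_{0,1/2}\phi\|^{q}\le\delta\, e^{\kappa\|w\|^{2}}\|\phi\|^{q}$ valid for $\beta$ small. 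The latter is exactly the quantitative non-degeneracy of the Malliavin matrix of the degenerately forced Navier--Stokes system proved in~\cite{HM-2006} (see also~\cite{FGRT-2015}): under Condition~\hyperlink{H}{\rm(H)}, for small $\beta$ the operator $\beta(\cM_{0,1/2}+\beta\I)^{-1}$ is small on the range of $J_{0,1/2}$, up to a prefactor $e^{\kappa\|w\|^{2}}$ coming from the growth of the flow.

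It then remains to iterate. By~\eqref{1.1} one has $\E_w e^{\lambda\|w_n\|^{2}}\le C_\lambda e^{\lambda\|w\|^{2}}$ uniformly in $n$ for $\lambda\in(0,\gamma_0)$; iterating the conditional one-step estimate along the filtration and decoupling the (correlated) weights $e^{\kappa\|w_n\|^{2}}$ from $\rho_n$ by H\"older's inequality and the higher even-moment versions of the one-step estimate, one extracts $\E\|\rho_n\|^{4}\le C\,e^{\kappa\|w\|^{2}-\alpha n}$ for integer $n$, provided $\e$ is small enough in terms of $\alpha$ and $\kappa$ small enough that the geometric contraction dominates the controlled growth; it suffices to treat $\kappa<\gamma_0$, a smaller exponent giving a stronger bound. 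For non-integer $t\in[n,n+1]$ inequality~\eqref{3.6} follows from the explicit formula for $\rho_t$ above, Lemmas~\ref{L:2.1}--\ref{L:2.2}, conditioning on $\FF_n$, and the integer-time bound. The scheme parallels the proof of Proposition~4.13 in~\cite{HM-2006}.

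I expect the main obstacle to be the one-step estimate, which rests on the quantitative non-degeneracy of the Malliavin matrix of the highly degenerately forced Navier--Stokes system, i.e.\ the hypoelliptic smoothing estimate of~\cite{HM-2006}; this is precisely where the generator structure of Condition~\hyperlink{H}{\rm(H)} enters, and it is imported here as a black box. The secondary difficulty is the iteration: the weights $e^{\kappa\|w_n\|^{2}}$ are correlated with $\rho_n$ and must be treated jointly with the contraction and the dissipativity bound~\eqref{1.1}, with $\beta$ and $\kappa$ tuned only at the end so that the per-step contraction becomes the exponential-in-$t$ decay~\eqref{3.6}.
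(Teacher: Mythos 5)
Your argument is correct, and at its core it relies on the same external input as the paper, but you take it one level deeper than the paper does. The paper's proof of Lemma~\ref{L:3.3} is in two steps: for integer times it simply cites Proposition~4.13 of \cite{HM-2006} wholesale (this is exactly where Condition~\hyperlink{H}{\rm(H)} enters), giving $\E\|\rho_n\|^4\le C\exp\{\kappa\|w\|^2-\alpha n\}$; it then passes to non-integer $t\in[n,n+1]$ by the explicit structure of $\rho_t$ on that interval, the bound $\|v_{n,n+1/2}\|_{L^2}\le\beta^{-1/2}\|J_{n,n+1/2}\rho_n\|$ coming from \eqref{2.8} and \eqref{2.10}, the operator bounds \eqref{2.6}--\eqref{2.7}, and Cauchy--Schwarz. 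You instead re-derive the integer-time decay: your exact recursion $\rho_{n+1}=\beta\,J_{n+1/2,n+1}(\cM_{n,n+1/2}+\beta\I)^{-1}J_{n,n+1/2}\rho_n$ is correct (it is the identity underlying Section~4.8 of \cite{HM-2006}), and your black box is the deeper inverse-Malliavin-matrix estimate of \cite{HM-2006} rather than its Proposition~4.13. What the paper's route buys is brevity and safety; what yours buys is transparency about where the generator structure of Condition~\hyperlink{H}{\rm(H)} and the choice of $\beta$ enter. Two caveats on your version: (i) the iteration you sketch is the delicate part of the \cite{HM-2006} proof you are reproducing --- a naive conditional H\"older decoupling of the weights $e^{\kappa\|w_n\|^2}$ compounds the H\"older exponents along the steps, and closing it requires either per-step exponents chosen summably or an exponential-moment bound on $\sum_{k\le n}\|w_k\|^2$ (a super-Lyapunov/exponential-martingale estimate), not just \eqref{1.1} applied at a single time, so as written this step is only a plausible sketch; (ii) for $t\in[n,n+1/2]$ there is no exact resolvent formula for $\rho_t$ (the control is only partially applied), so one argues as the paper does via \eqref{3.8}--\eqref{3.9}; your appeal to Lemmas~\ref{L:2.1}--\ref{L:2.2} covers this, but the phrase ``a similar formula'' is imprecise. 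Neither point is a fatal gap, since the scheme you describe is exactly the one carried out in \cite{HM-2006}, which the paper chooses to cite rather than repeat.
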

\begin{proof}
For  integer times, this result  is established  in   Proposition~4.13 in~\cite{HM-2006} (this is   where Condition~\hyperlink{H}{\rm(H)} is used).  Therefore,  there are    $\beta=\beta(\kappa,\alpha )>0$  and~$C=C(\kappa,\alpha )>0$  such that
  \begin{equation}
 \label{3.7}
    \E\|\rho_n\|^4 \le  C \exp\{\kappa \|w\|^2- \alpha  n\} \quad \text{for any $w\in \tilde H$ and $n\ge0$}.
  \end{equation}From the construction it follows that
 $$
 \rho_t=\begin{cases} J_{n,t}\rho_n-\cA_{n,t}v_{n,t}, & \text{for }t \in [n,n+1/2], \\ J_{n+1/2,t}\rho_{n+1/2}, & \text{for }t\in [n+1/2,n+1] \end{cases}	
 $$ for any  $n\ge0$.
  Using \eqref{3.5} and inequalities \eqref{2.8} and \eqref{2.10}, we get
    \begin{equation} \label{3.8}
    \|v_{n,n+1/2}\|_{L^2([n,n+1/2];\R^d)} \le \beta^{-1/2}\|J_{n,n+1/2} \rho_n\|.
\end{equation}
Hence,   for any $t\in [n,n+1/2]$,
   \begin{align}
    \|\rho_t\|&\le  \|J_{n,t}\rho_n\|+\|\aA_{n,t}v_{n,t}\|  \nonumber
    \\  \nonumber  &\le  \|J_{n,t}\rho_n\|+\|\aA_{n,t}\|_{\cL(L^2([n,t];\R^d),\tilde H)} \|v_{n,t}\|_{L^2([n,n+1/2];\R^d)}
    \\  \nonumber  &\le  \|J_{n,t}\rho_n\|+\|\aA_{n,t}\|_{\cL(L^2([n,t];\R^d),\tilde H)} \|v_{n,n+1/2}\|_{L^2([n,n+1/2];\R^d)}
    \\ \label{3.9}
     &\le C\Big(\|J_{n,t}\rho_n\| +\beta^{-1/2}  \|J_{n,n+1/2}\rho_n\| \sup_{s\in [n,t]}\|J_{s,t} \|_{\cL( \tilde H,\tilde{H})} \Big),
  \end{align}
  where we used  \eqref{2.7} and \eqref{3.8}. For any
     $t\in [n+1/2,n+1],$ it holds that
$$
\|\rho_t\|\le  \sup_{s\in [n+1/2,t]}\|J_{s,t}\rho_{n+1/2}\| .
$$
  Combining this with  inequalities    \eqref{2.6},   \eqref{3.7},~\eqref{3.9}, the Cauchy--Schwarz inequality, and the fact that  $\kappa>0$ and $\alpha>0$ are arbitrary,   we arrive at~\eqref{3.6}.
\end{proof}

\begin{lemma}
\label{L:3.4}The constants $\beta>0$ and $C>0$ in Lemma \ref{L:3.3} can be chosen such that     also
  \begin{equation}
  \label{3.10}
  \E \left| \int_{n}^t v(s)\dd W(s)\right|^2 \le   C\exp\{\kappa \|w\|^2-\alpha  n \}
  \end{equation}
    for any $n\geq 0$,   $t\in [n,n+1]$, and $w\in \tilde H$.
\end{lemma}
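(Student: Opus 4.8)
The plan is to deduce \eqref{3.10} from the general $L^2$ bound for the Skorokhod integral, combined with the operator estimates of Lemma~\ref{L:2.2} and the Malliavin derivative estimates of Lemma~\ref{L:2.3}. Since $v$ vanishes on $[n+1/2,n+1]$ by its definition~\eqref{3.5}, it suffices to treat $t\in[n,n+1/2]$ (for $t\in[n+1/2,n+1]$ the integral equals the one at $t=n+1/2$). The process $v$ restricted to $[n,t]$ lies in $\D^{1,2}(L^2([n,t];\R^d))$ — which follows from Lemma~\ref{L:2.3} together with the Malliavin differentiability of $\rho_n$, obtained by induction from the recursion for $\rho$ — and therefore
$$
\E\left|\int_n^t v(s)\,\dd W(s)\right|^2\le \E\int_n^t|v(s)|^2\,\dd s+\E\int_0^{n+1/2}\int_n^t\|\cD_r v(s)\|^2\,\dd s\,\dd r ,
$$
see~\cite{nualart2006}; here the $r$-integral runs over $[0,n+1/2]$ only, because $v$ restricted to $[n,t]$ depends on the noise just up to time $n+1/2$. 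For the first term, \eqref{3.8} gives $\int_n^t|v(s)|^2\,\dd s\le\beta^{-1}\big(\sup_{s\in[n,n+1/2]}\|J_{s,n+1/2}\|^2_{\cL(\tilde H,\tilde H)}\big)\|\rho_n\|^2$, so by the Cauchy--Schwarz inequality, \eqref{2.6}, and~\eqref{3.7} its expectation is $\le C\exp\{\kappa\|w\|^2-\alpha n\}$, the factor $\beta^{-1}=\beta^{-1}(\kappa,\alpha)$ being absorbed into $C=C(\kappa,\alpha)$ after decreasing $\kappa,\alpha$ if necessary.

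For the second term I would write $v_{n,n+1/2}=\cA_{n,n+1/2}^*(\cM_{n,n+1/2}+\beta\I)^{-1}J_{n,n+1/2}\rho_n$ and differentiate by the Leibniz rule, using $\cD_r(\cM_{n,n+1/2}+\beta\I)^{-1}=-(\cM_{n,n+1/2}+\beta\I)^{-1}\big((\cD_r\cA_{n,n+1/2})\cA_{n,n+1/2}^*+\cA_{n,n+1/2}(\cD_r\cA_{n,n+1/2}^*)\big)(\cM_{n,n+1/2}+\beta\I)^{-1}$. Then $\cD_r v_{n,n+1/2}$ is a finite sum of compositions of three kinds of factors: operators whose operator norm is controlled by a power of $\beta^{-1}$ via Lemma~\ref{L:2.2} (notably $\|\cA_{n,n+1/2}^*(\cM_{n,n+1/2}+\beta\I)^{-1}\|\le\beta^{-1/2}$, $\|\cA_{n,n+1/2}^*(\cM_{n,n+1/2}+\beta\I)^{-1}\cA_{n,n+1/2}\|\le1$, $\|(\cM_{n,n+1/2}+\beta\I)^{-1}\|\le\beta^{-1}$); exactly one Malliavin derivative among $\cD_r J_{n,n+1/2}$, $\cD_r\cA_{n,n+1/2}$, $\cD_r\cA^*_{n,n+1/2}$, bounded in every $L^p$ by Lemma~\ref{L:2.3} (and, for $r\le n$, via the explicit formula for $\cD_r^i J_{s,t}$ and Lemma~\ref{L:2.1}); and either the factor $\rho_n$ or the factor $\cD_r\rho_n$. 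By H\"older's inequality every term containing $\rho_n$ has $L^2$-norm $\le C\exp\{\kappa\|w\|^2-\alpha n\}$ by~\eqref{3.7} (and its higher-moment analogues, proved the same way), the $\dd r$- and $\dd s$-integrations contributing only a polynomial-in-$n$ factor that is absorbed by decreasing $\alpha$.

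It remains to control the single term carrying $\cD_r\rho_n$, of norm $\le\beta^{-1/2}\|J_{n,n+1/2}\|_{\cL(\tilde H,\tilde H)}\|\cD_r\rho_n\|$, for which one needs $\E\|\cD_r\rho_n\|^p\le C\exp\{\kappa\|w\|^2-\alpha n\}$ for all $p\ge1$; this is where Condition~\hyperlink{H}{\rm(H)} re-enters. I would prove it by induction on $n$: one has $\cD_r\rho_0=0$, and collapsing the two half-steps in~\eqref{3.5} yields the clean recursion $\rho_{n+1}=\beta\,J_{n+1/2,n+1}(\cM_{n,n+1/2}+\beta\I)^{-1}J_{n,n+1/2}\,\rho_n$, whence $\cD_r\rho_{n+1}$ equals this same operator block applied to $\cD_r\rho_n$, plus source terms built from $\cD_r$ of the operators times $\rho_n$ (hence exponentially small by~\eqref{3.7}). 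Unrolling the recursion expresses $\cD_r\rho_n$ as a sum of iterated operator blocks acting on these source terms, and since the homogeneous block $\beta\,J_{m+1/2,m+1}(\cM_{m,m+1/2}+\beta\I)^{-1}J_{m,m+1/2}$ is exactly the one appearing in the $\rho$-recursion, the probabilistic contraction established in Proposition~4.13 of~\cite{HM-2006} (the origin of~\eqref{3.7}) applies — after conditioning on the appropriate $\sigma$-algebra — not just to $\rho_n$ but to an arbitrary vector starting the iteration; combining it with the induction hypothesis yields the required bound, and assembling the two estimates gives~\eqref{3.10}.

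\emph{The main obstacle} is precisely this last estimate on $\cD_r\rho_n$: the crude operator-norm bound on the homogeneous block is only $\le\|J\|^2_{\cL(\tilde H,\tilde H)}$, with no contraction, so one cannot simply iterate and instead has to re-run the hypoelliptic argument of~\cite{HM-2006} that underlies~\eqref{3.7}, now for the linearised residual recursion and in the presence of the source terms.
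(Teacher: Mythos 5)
Your skeleton (an $L^2$ bound for the Skorokhod integral, the Leibniz expansion of $\cD_r v_{n,n+1/2}$, the operator bounds of Lemmas~\ref{L:2.2}--\ref{L:2.3}, and the decay \eqref{3.7} of $\rho_n$) is the same as in the paper, but the way you set up the isometry creates a genuine gap. You majorise the variance by $\E\int_n^t|v(s)|^2\,\dd s+\E\int_0^{n+1/2}\!\int_n^t\|\cD_r v(s)\|^2\,\dd s\,\dd r$, letting $r$ run over all of $[0,n+1/2]$. This bound is valid but wasteful: for $r<n$ the Leibniz rule applied to \eqref{3.5} produces the additional term $\cA^*_{n,n+1/2}(\cM_{n,n+1/2}+\beta\I)^{-1}J_{n,n+1/2}\,\cD_r\rho_n$, and the estimate you then need, $\E\|\cD_r\rho_n\|^p\le C\exp\{\kappa\|w\|^2-\alpha n\}$ uniformly in $r<n$, follows neither from \eqref{3.7} nor from Lemmas~\ref{L:2.1}--\ref{L:2.3}. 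As you acknowledge, the homogeneous block $\beta J_{m+1/2,m+1}(\cM_{m,m+1/2}+\beta\I)^{-1}J_{m,m+1/2}$ admits no pathwise contraction, so your plan amounts to re-running the hypoelliptic argument behind Proposition~4.13 of \cite{HM-2006} for the linearised, inhomogeneous residual recursion; you only sketch this step (``the probabilistic contraction applies after conditioning''), so the central estimate of your argument is left unproven. That is the gap.

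The detour is unnecessary, and this is precisely where the paper's proof differs. The generalised It\^o isometry for the Skorokhod integral of $v$ over $[n,t]$ is
$$
\E\Big|\int_n^t v(s)\,\dd W(s)\Big|^2=\E\int_n^t|v(s)|_{\R^d}^2\,\dd s+\E\int_n^t\!\!\int_n^t \text{Tr}\big(\cD_s v(r)\,\cD_r v(s)\big)\,\dd s\,\dd r,
$$
where \emph{both} time variables stay in $[n,t]\subset[n,n+1/2]$: the cross term also contains the factor $\cD_r v(s)$ with $s\in[n,t]$, so nothing is gained by enlarging the $r$-range. Since $\rho_n$ is $\cF_n$-measurable, $\cD_r\rho_n=0$ for a.e.\ $r\in[n,n+1/2]$, so in the Leibniz expansion the Malliavin derivative never falls on $\rho_n$; only $\cD_r^i\cA_{n,n+1/2}$, $\cD_r^i\cA^*_{n,n+1/2}$ and $\cD_r^iJ_{n,n+1/2}$ appear, and these are exactly what \eqref{2.11}--\eqref{2.13} control, combined with \eqref{2.8}--\eqref{2.10} for the powers of $\beta^{-1}$ and with \eqref{2.6}, \eqref{3.7} and the Cauchy--Schwarz inequality for the factor $J_{n,n+1/2}\rho_n$. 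With this correction (your estimate of the first term already matches the paper's estimate of $L_1$), the remaining bookkeeping goes through and coincides with the paper's proof.
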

\begin{proof}  In this proof, we consider the endpoint case $t=n+1$; the case $t\in [n,n+1)$ is treated in a similar way.
  Using  the generalised It$\hat{\text{o}}$ isometry (see Section~1.3 in~\cite{nualart2006}) and the fact that $v(t)=0$ for $t\in [n+1/2,n+1]$ (see \eqref{3.5}), we~obtain
  \begin{align}
  \E \left| \int_{n}^{n+1} v(s)\dd  W(s)\right|^2
  & =  \E \int_{n}^{n+1/2} |v(s)|_{\R^d}^2 \dd  s   \nonumber\\&\quad+\E\int_n^{n+1/2}\!\!\!\int_{n}^{n+1/2} \text{Tr}(\cD_sv(r)\cD_rv(s))\dd  s \dd  r\nonumber
   \\  \nonumber   & \le    \E \int_{n}^{n+1/2} |v(s)|_{\R^d}^2 \dd  s \nonumber\\&\quad+\E \int_{n}^{n+1/2}\!\!\!\int_{n}^{n+1/2} |\cD_rv_{n,n+1/2}(s)|_{\R^d\times \R^d}^2\dd  s\dd  r\nonumber\\  & =L_1+L_2.
  \label{3.11}
  \end{align}We estimate $L_1$ by using \eqref{2.6},
       \eqref{3.7},  and \eqref{3.8}:
     \begin{align}
  \nonumber  \E \int_{n}^{n+1/2} |v(s)|_{\R^d}^2 \dd  s &\le     \beta^{-1}\E \|J_{n,n+1/2}\rho_{n}\|^2 \\
  &\le \nonumber
  C \beta^{-1}  \exp\{\kappa \|w\|^2/2 \} \left(\E \|\rho_n\|^4\right)^{1/2}
   \\
     \label{3.12}
   &\le   C\exp\{\kappa \|w\|^2-\alpha  n/2 \}.
  \end{align}
   To estimate $L_2$,   we use the explicit form of   $\cD_r v.$
  Notice  that,   for any $r\in[n,n+1/2]$ and~$i=1,\dots, d$,
\begin{align*}
   \cD_r^iv_{n,n+1/2}
 &= \cD_r^i(\cA_{n,n+1/2}^*) (\cM_{n,n+1/2}+\beta \I)^{-1}J_{n,n+1/2}\rho_{n}
    \\ &\quad + \cA_{n,n+1/2}^*(\cM_{n,n+1/2}+\beta\I)^{-1}
 \\ & \quad\quad\times \Big( \cD_r^i(\cA_{n,n+1/2})\cA_{n,n+1/2}^*
  +\cA_{n,n+1/2}\cD_r^i(\cA_{n,n+1/2}^*)\Big) \\ & \quad\quad\times
 (\cM_{n,n+1/2}+\beta\I)^{-1}
 J_{n,n+1/2} \rho_{n}\\ & \quad +\cA_{n,n+1/2}^* (\cM_{n,n+1/2}+\beta\I)^{-1}\cD_r^i(J_{n,n+1/2}) \rho_{n}.
\end{align*}
By inequalities \eqref{2.8}-\eqref{2.10}, we have
\begin{align*}
   \|\cD_r^iv_{n,n+1/2}\|_{L^2([n,n+1/2];\R^d)}
  & \le   \beta^{-1}\| \cD_r^i(\cA_{n,n+1/2})\|_{\cL(L^2([n,n+1/2];\R^d),\tilde{H})} \\&\quad\quad\times
  \|J_{n,n+1/2} \rho_{n}\|
    \\ &\quad +2\beta^{-1}\| \cD_r^i(\cA_{n,n+1/2}^*)\|_{\cL(\tilde{H},L^2([n,n+1/2];\R^d))} \\&\quad\quad\times
  \|J_{n,n+1/2} \rho_{n}\|  \\ &\quad
  + \beta^{-1/2} \| \cD_r^i(J_{n,n+1/2})  \rho_{n}\|.
\end{align*}
Inequalities \eqref{2.6}, \eqref{2.11}-\eqref{2.13}, and \eqref{3.7}, imply  that
\begin{align}
  \E \int_{n}^{n+1/2}\!\!\!\int_{n}^{n+1/2} |\cD_rv_{n,n+1/2}(s)|_{\R^d\times \R^d}^2\dd  s\dd  r
   &  \le  C \beta^{-2}  \exp\{\kappa \|w\|^2/2 \} \left(\E \|\rho_n\|^4\right)^{1/2}
 \nonumber \\ \label{3.13} &\le   C  \exp\{\kappa \|w\|^2-n\alpha/2 \}.
\end{align}
Combining   estimates \eqref{3.11}-\eqref{3.13} and using the fact that $\kappa>0$ and $\alpha>0$  are~arbitrary,
we obtain the desired result.
\end{proof}
Finally, we will use a growth estimate for the Feynman--Kac semigroup $ \tilde \PPPP^{\tilde V}_t$. From  the first growth esimate in~\eqref{1.3} for the semigroup $  \PPPP^V_t$ it follows that there are numbers
$R_0>0$, $\gamma\in (0,\gamma_0)$, and $m\ge1$
  such that
\begin{equation}\label{3.14}
 \tilde \PPPP^{\tilde V}_t \mek (w)\le  C\, \wwww_m(\KK w)    \|\PPPP^V_t{\mek}\|_{R_0}\quad \text{for any $w\in \tilde H$ and $t\ge0$}.
\end{equation}

Now we are in a position to prove Proposition  \ref{P:3.2}.
\begin{proof}[Proof of Proposition  \ref{P:3.2}]
Replacing $V$ by $V-\inf_{v\in H} V(v),$ without loss of generality, we can assume that $V\ge0$.  Let $v$ be the process defined by \eqref{3.5}, let  $\kappa$ and~$\alpha $ be   positive numbers (to be   chosen later),  and  let the number $\beta=\beta(\kappa,\alpha )>0$  be such that   inequalities~\eqref{3.6} and \eqref{3.10} hold. Furthermore,  let the positive numbers $R_0$ and $m$ be such that inequality  \eqref{3.14} holds.
Then the computations in \eqref{3.3} are rigorously justified, and we need to   estimate the terms $I_1$, $I_2$, and~$I_3$.

{\it Step~1: Estimate for $I_1$}. We write the Skorokhod integral in the term $I_1$  (see~\eqref{3.4})
  as follows
$$
\int_0^t v(s)\dd  W(s) =\sum_{n=1}^{\lfloor t \rfloor}
  \int_{n-1}^{n} v(s)\dd  W(s) + \int_{\lfloor t \rfloor}^{t} v(s)\dd  W(s),
$$
where  $\lfloor t \rfloor $  is the largest number less than or equal to $t$ and the sum in the right-hand side is replaced by zero if $t<1$.
Since $v(s)$ is $\cF_{n}$-measurable for~$s\in [n-1,n]$,  the Skorokhod integral  $ \int_{n-1}^{n} v(s)\dd  W(s)$ is also $\cF_{n}$-measurable. Hence, using the Markov property, we obtain
\begin{align*}
 \nonumber   I_{1,n}  =&\,\, \E_w \left[\Xi_t\tilde{\psi}(w_t)   \int_{n-1}^{n} v(s)\dd  W(s) \right]
  \\   \nonumber   =& \,\,
  \E_w \left[\E_w \left(\Xi_t\tilde{\psi}(w_t) \int_{n-1}^{n} v(s)\dd  W(s)\Big|\cF_{n}  \right)\right]
  \\ \nonumber  =&\,\,
  \E_w \left[\Xi_n \int_{n-1}^{n} v(s)\dd  W(s) \,\E_w \left( \exp\left\{\int_{n}^t\tilde{V}(w_s)\dd  s \right\}\tilde{\psi}(w_t)\Big|\cF_{n}\right)   \right]\\   =& \,  \,
  \E_w \left[\Xi_n \int_{n-1}^{n} v(s)\dd  W(s) \, \left(\tilde \PPPP^{\tilde V}_{t-n} \tilde \psi\right) (w_n) \right]
\end{align*}
for any $1\le  n\le  \lfloor t \rfloor$.~Using inequalities \eqref{1.2}, \eqref{3.10}, \eqref{3.14},   the assumption that~$V\ge0$,   and the Cauchy--Schwarz inequality, we see that
\begin{align*}
 \nonumber  I_{1,n} &  \le   C \|\psi\|_\ty e^{\|V\|_\ty n}\|\PPPP_t^V\mek\|_{R_0}  \E_w\left[\wwww_{m}(u_{n}) \Big| \int_{n-1}^{n} v(s)\dd  W(s)\Big|\right]
  \\   \nonumber   & \le   C\|\psi\|_\ty   e^{\|V\|_\ty n}\|\PPPP_t^V\mek\|_{R_0} \left(\E_u\wwww_{m}^2(u_{n})\right)^{1/2} \left( \E_w \Big| \int_{n-1}^{n} v(s)\dd  W(s)\Big|^2\right)^{1/2}
    \\    & \le    C \|\psi\|_\ty  e^{\|V\|_\ty n}\|\PPPP_t^V\mek\|_{R_0}   \wwww_{m}(u)  \exp\{(\kappa \|w\|^2 -\alpha  n)/2\},
\end{align*}
where $u=\KK w$ and $u_s=\KK w_s$. Next, using \eqref{3.10} and $V\ge 0$, we get
\begin{align*}
 \nonumber I_{1,\lfloor t \rfloor+1}&=\E_w \left[\Xi_t\tilde{\psi}(w_t) \int_{\lfloor t \rfloor }^{t} v(s)\dd  W(s)  \right]
  \\ &\le  \|\psi\|_\ty
  e^{\|V\|_\ty t }\E_w \left|\int_{\lfloor t \rfloor }^{t} v(s)\dd  W(s)\right|\nonumber\\\  & \le    C\|\psi\|_\ty    e^{\|V\|_\ty t}  \|\PPPP_t^V\mek\|_{R_0}  \exp\{(\kappa \|w\|^2-\alpha  \lfloor t \rfloor )/2 \}.
\end{align*} Combining the estimates for $I_{1,n}$ and $I_{1,\lfloor t \rfloor+1}$, we arrive at
\begin{align*}
I_1\le  C  \|\psi\|_\ty \|\PPPP_t^V\mek\|_{R_0}  \exp\{\kappa \|w\|^2\} \sum_{n=1}^{^{\lfloor t \rfloor}}    \exp\{(\|V\|_\ty -\alpha /2)n\}.
	\end{align*}

{\it Step~2: Estimate for $I_2$}. We first write
\begin{align*}
    I_2 &= \E_w\left[\int_0^{\lfloor t \rfloor } \Xi_t\tilde{\psi}(w_t)\nabla \tilde{V}(w_s)\rho_s  \dd  s\right] +\E_w\left[\int_{\lfloor t \rfloor }^t \Xi_t\tilde{\psi}(w_t)\nabla \tilde{V}(w_s)\rho_s  \dd  s\right]
    \\&= I_{2,1}+I_{2,2}.
\end{align*}Let $\lceil s \rceil$ be the smallest integer greater than or equal to $s$. Then  $\rho(s)$ is   $\cF_{\lceil s\rceil}$-measurable,  and using the Markov property, we obtain
\begin{align*}
    I_{2,1} &=  \E_w\left[ \int_0^{\lfloor t\rfloor}  \E_w \left( \Xi_t\tilde{\psi}(w_t)   \nabla \tilde{V}(w_s)\rho_s \big| \cF_{ \lceil s \rceil } \right)  \dd  s  \right]
  \\ &= \E_w \left[ \int_0^{\lfloor t \rfloor }
  \Xi_{\lceil s \rceil} \nabla \tilde{V}(w_s)\rho_s
  \
  \E_w \left(\exp\left\{\int_{\lceil s \rceil}^t\tilde{V}(w_r)\dd  r\right\}\tilde{\psi}(w_t)   \Big| \cF_{ \lceil s \rceil } \right)  \dd  s  \right]
 \\ &= \E_w \left[ \int_0^{\lfloor t \rfloor }
  \Xi_{\lceil s \rceil} \nabla \tilde{V}(w_s)\rho_s
  \left(
  \tilde \PPPP^{\tilde V}_{t-\lceil s \rceil} \tilde \psi\right) (w_{\lceil s \rceil}) \dd  s  \right].
\end{align*}
Then  inequalities  \eqref{1.2}, \eqref{3.6}, \eqref{3.14},  the assumption that~$V\ge0$,   and the Cauchy--Schwarz inequality imply that
\begin{align*}
   I_{2,1} &  \le
    C \|\psi\|_\ty\|\nabla V\|_\ty \|\PPPP_t^V\textbf{1}\|_{R_0}
    \int_0^{\lfloor t\rfloor}  e^{\|V\|_\ty  \lceil s \rceil  } \E_w  \left[\wwww_m(u_{ \lceil s \rceil  }) \|\rho_s\| \right] \dd  s
    \\ &\le   C
    \|\psi\|_\ty\|\nabla V\|_\ty \|\PPPP_t^V\textbf{1}\|_{R_0}
    \int_0^{\lfloor t\rfloor} e^{\|V\|_\ty  \lceil s \rceil  }  \left(\E_u \wwww_m^2(u_{\lceil s \rceil })\right)^{1/2} \left(\E_w \|\rho_s\|^2\right)^{1/2}  \dd  s
      \\
     & \le
    C\|\psi\|_\ty\|\nabla V\|_\ty \|\PPPP_t^V\textbf{1}\|_{R_0} \wwww_{m}(u)   \exp\{\kappa \|w\|^2/4\}\\&\quad\quad\quad\quad\times
    \int_0^{\lfloor t\rfloor}  \exp\{\|V\|_\ty   \lceil s \rceil -\alpha s/4 \}  \dd  s.
\end{align*}
 To estimate $I_{2,2}$, we use  \eqref{3.6} and $V\ge0$:
\begin{align*}
	I_{2,2}&\le \|\psi\|_\ty e^{\|V\|_\ty t }\|\nabla V\|_\ty \E_w \int_{\lfloor t \rfloor }^t \|\rho_s\| \dd s \\&\le
C  \|\psi\|_\ty e^{\|V\|_\ty t }\|\nabla V\|_\ty\|\PPPP_t^V\mek\|_{R_0}
  \exp\{(\kappa \|w\|^2-\alpha t)/4\}.
\end{align*}Thus
\begin{align*}
I_2 &\le  C\|\psi\|_\ty\|\nabla V\|_\ty \|\PPPP_t^V\textbf{1}\|_{R_0}   \exp\{\kappa \|w\|^2\} \\&\quad\quad\quad\quad\times\left(\exp\{-\alpha t)/4\}+
    \int_0^{\lfloor t\rfloor}   \exp\{\|V\|_\ty   \lceil s \rceil -\alpha s/4 \}  \dd  s\right).
\end{align*}

{\it Step~3: Estimate for $I_3$}.
By  \eqref{3.6}, we have
$$
  |I_3|\le  C \|\nabla \psi\|_\ty  e^{\|V\|_\ty t}\E \|\rho_t\|
  \le  \|\nabla \psi\|_\ty  e^{\|V\|_\ty t}  \exp\{(\kappa \|w\|^2-t\alpha )/4\}.
$$ Choosing
 $
 \alpha \geq 4 \|V\|_\ty-\log a
 $ and  combining the above estimates of the terms $I_i$, $i=1,2,3$ with  \eqref{3.3}, we complete the proof of
Proposition~\ref{P:3.2}.
 \end{proof}

\def\cprime{$'$} \def\cprime{$'$}
  \def\polhk#1{\setbox0=\hbox{#1}{\ooalign{\hidewidth
  \lower1.5ex\hbox{`}\hidewidth\crcr\unhbox0}}}
  \def\polhk#1{\setbox0=\hbox{#1}{\ooalign{\hidewidth
  \lower1.5ex\hbox{`}\hidewidth\crcr\unhbox0}}}
  \def\polhk#1{\setbox0=\hbox{#1}{\ooalign{\hidewidth
  \lower1.5ex\hbox{`}\hidewidth\crcr\unhbox0}}} \def\cprime{$'$}
  \def\polhk#1{\setbox0=\hbox{#1}{\ooalign{\hidewidth
  \lower1.5ex\hbox{`}\hidewidth\crcr\unhbox0}}} \def\cprime{$'$}
  \def\cprime{$'$} \def\cprime{$'$} \def\cprime{$'$}

\end{document}